\let\mathbb=\varmathbb
\theoremstyle{plain}
\newtheorem{theorem}{Theorem}[section]
\newtheorem{proposition}[theorem]{Proposition}
\newtheorem{lemma}[theorem]{Lemma}
\newtheorem{corollary}[theorem]{Corollary}
\newtheorem{claim}{Claim}[theorem]
\newtheorem{definition}[theorem]{Definition}
\theoremstyle{nonumberplain}
\newtheorem{proof}{Proof}
\newtheorem{claimproof}{Proof}
\newcommand{\Ord}{\ensuremath{\text{{\rm Ord}}}}
\newcommand{\ZFC}{\ensuremath{\text{{\sf ZFC}}}}
\def\forces{\mathrel\|\joinrel\relbar}
\DeclareMathOperator{\dom}{dom}
\DeclareMathOperator{\rng}{rng}
\DeclareMathOperator{\cf}{cf}
\DeclareMathOperator{\cof}{cof}
\newcommand{\MA}{\ensuremath{\text{{\sf MA}}}}
\newcommand{\PFA}{\ensuremath{\text{{\sf PFA}}}}
\newcommand{\MRP}{\ensuremath{\text{{\sf MRP}}}}
\newcommand{\MM}{\ensuremath{\text{{\sf MM}}}}
\newcommand{\Cl}{\ensuremath{\text{{\rm Cl}}}}
\newcommand{\TP}{\ensuremath{\text{{\sf TP}}}}
\newcommand{\ITP}{\ensuremath{\text{{\sf ITP}}}}
\newcommand{\ISP}{\ensuremath{\text{{\sf ISP}}}}
\newcommand{\SCH}{\ensuremath{\text{{\sf SCH}}}}
\newcommand{\SP}{\ensuremath{\text{{\sf SP}}}}
\newcommand{\non}{\ensuremath{\lnot}}
\newcommand{\IT}{\ensuremath{\text{{\rm IT}}}}
\newcommand{\IS}{\ensuremath{\text{{\rm IS}}}}
\newcommand{\medcup}{\ensuremath{{\textstyle\bigcup}}}
\DeclareMathOperator{\height}{ht}
\DeclareMathOperator{\Con}{Con}
\DeclareMathOperator{\Coll}{Coll}
\begin{document}

\title{On the consistency strength of the proper forcing axiom}
\author{Matteo Viale, Christoph Wei\ss\footnote{Parts of the results of this paper are from the second author's doctoral dissertation~\cite{diss} written under the supervision of Dieter Donder, to whom the second author wishes to express his gratitude.}}
\date{}

\maketitle

\begin{abstract}
Recently the second author introduced combinatorial principles that characterize supercompactness for inaccessible cardinals but can also hold true for small cardinals.
We prove that the proper forcing axiom \PFA\ implies these principles hold for $\omega_2$.
Using this, we argue to show that any of the known methods for forcing models of \PFA\ from a large cardinal assumption requires a strongly compact cardinal.
If one forces \PFA\ using a proper forcing, then we get the optimal result that a supercompact cardinal is necessary.
\end{abstract}



\section{Introduction}

Since their introduction in the seventies supercompact cardinals played a central role in set theory. They have been a fundamental assumption to obtain many of the most interesting breakthroughs:
Solovay's original proof that the singular cardinal hypothesis \SCH\ holds eventually above a large cardinal, Silver's first proof of $\Con(\neg\SCH)$, Baumgartner's proof of the consistency of the proper forcing axiom \PFA~\cite{devlin} and Foreman, Magidor, and Shelah's proof of the consistency of Martin's maximum \MM~\cite{foreman_magidor_shelah} all relied on the assumption of the existence of a supercompact cardinal.

While some of these result have been shown to have considerably weaker consistency strength, the exact large cardinal strength of the forcing axioms \PFA\ and \MM\ is one of the major open problems in set theory.
It is what we want to address in this paper.

Forcing axioms play an important role in contemporary set theory.
Historically they evolved from Martin's axiom, which was commonly used as the axiomatic counterpart to ``$V = L$.''
The most prominent forcing axioms today are \PFA\ as well as the stronger \MM.
Not only do they serve as a natural extension of \ZFC, they also answer a plethora of questions undecidable in \ZFC\ alone,
from elementary questions like the size of the continuum to combinatorially complicated ones like the basis problem for uncountable linear orders~\cite{moore.basis}.
Even problems originating from other fields of mathematics and apparently unrelated to set theory have been settled appealing to \PFA.
For example, Farah~\cite{farah} recently proved the nonexistence of outer automorphisms of the Calkin algebra assuming \PFA.

The consistency proofs of \PFA\ and \MM\ both start in a set theoretic universe in which there is a supercompact cardinal $\kappa$.
They then collapse $\kappa$ to $\omega_2$ in such a way that in the resulting model \PFA\ or \MM\ holds,
thus showing the consistency strength of these axioms is at most that of the existence of a supercompact cardinal.

An early result on \PFA\ by Baumgartner~\cite{baumgartner.PFA} was that \PFA\ implies the tree property on $\omega_2$, that is, \PFA\ implies there are no $\omega_2$-Aronszajn trees.
As a cardinal $\kappa$ is weakly compact if and only if it is inaccessible and the tree property holds on $\kappa$, this can be seen as \PFA\ showing the ``weak compactness'' of $\omega_2$, apart from its missing inaccessibility.
This is an affirmation of the idea that collapsing a large cardinal to $\omega_2$ is necessary to produce a model of \PFA,
and it actually implies the consistency strength of \PFA\ is at least the existence of a weakly compact cardinal,
for if the tree property holds on $\omega_2$, then $\omega_2$ is weakly compact in $L$ by~\cite{mitchell}.

This was the first insight that showed \PFA\ posses large cardinal strength, and many heuristic results indicate that supercompactness actually is the correct consistency strength of \PFA\ and thus in particular also of \MM.
Still giving lower bounds for the consistency strength of \PFA\ or \MM\ is one major open problem today.
While inner model theoretic methods were refined and enhanced tremendously over the last three decades, the best lower bounds they can establish today are still far below supercompactness~\cite{jensen.schimmerling.schindler.steel}.

In~\cite{weiss} the second author introduced combinatorial principles which do for strong compactness and supercompactness what the tree property does for weak compactness:
A cardinal $\kappa$ is strongly compact (supercompact) if and only if $\kappa$ is inaccessible and $\TP(\kappa)$ or, equivalently, $\SP(\kappa)$ ($\ITP(\kappa)$ or, equivalently, $\ISP(\kappa)$) holds.
We will show \PFA\ implies $\ISP(\omega_2)$, the strongest of the four principles. 
This, in the line of thought from above, says \PFA\ shows $\omega_2$ is, modulo inaccessibility, ``supercompact.''

Apart from the strong heuristic evidence this gives, by using arguments for pulling back these principles from generic extensions these characterizations actually allow us to show the following theorems:
If one forces a model of \PFA\ using a forcing that collapses a large cardinal $\kappa$ to $\omega_2$ and satisfies the $\kappa$-covering and $\kappa$-approximation properties,\footnote{See Definition~\ref{def.covering_approximation}.} then $\kappa$ has to be strongly compact;
if the forcing is also proper, then $\kappa$ is supercompact.
We will show that all known forcings for producing models of \PFA\ by collapsing an inaccessible cardinal $\kappa$ to $\omega_2$ satisfy these properties.

Results of this kind have first been obtained by Neeman~\cite{Nee08}.
He showed that if one starts with a ground model that satisfies certain fine structural properties and forces \PFA\ by means of a proper forcing, then $\omega_2$ of the generic extension has to be a cardinal $\kappa$ which is close to being $\kappa^+$-supercompact in the ground model.
(More precisely, in the ground model $[\kappa,\kappa^+]$ is a $\Sigma^2_1$-indescribable gap.)
Our results, which approach the issue from a different perspective, are substantially stronger in that they reach full supercompactness.

\subsection*{Notation}

The notation used is mostly standard.
For a regular cardinal $\delta$, $\cof \delta$ denotes the class of all ordinals of cofinality $\delta$.

The phrases \emph{for large enough $\theta$} and \emph{for sufficiently large $\theta$}
will be used for saying that there exists a $\theta'$ such that the sentence's proposition holds for all $\theta \geq \theta'$.

For an ordinal $\kappa$ and a set $X$ we let $P_\kappa X \coloneqq \{ x \subset X\ |\ |x| < \kappa \}$ and, if $\kappa \subset X$,
\begin{equation*}
	P_\kappa' X \coloneqq \{ x \in P_\kappa X\ |\ \kappa \cap x \in \Ord,\ \langle x, \in \rangle \prec \langle X, \in \rangle  \}.
\end{equation*}
For $x \in P_\kappa X$ we set $\kappa_x \coloneqq \kappa \cap x$.
For $f: P_\omega X \to P_\kappa X$ let $\Cl_f \coloneqq \{ x \in P_\kappa X\ |\ \forall z \in P_\omega x\ f(z) \subset x \}$.
$\Cl_f$ is club, and it is well known that for any club $C \subset P_\kappa X$ there is an $f: P_\omega X \to P_\kappa X$ such that $\Cl_f \subset C$.

For sections~\ref{sect.principles} and~\ref{sect.guessing}, $\kappa$ and $\lambda$ are assumed to be cardinals, $\kappa \leq \lambda$, and $\kappa$ is regular and uncountable.

\subsection*{Acknowledgments}

The authors wish to express their gratitude to
David Asper\'{o},
Sean Cox,
Dieter Donder,
Hiroshi Sakai,
Ralf Schindler,
and Boban Veli\v{c}kovi\'c
for valuable comments and feedback on this research.
They are indebted to Menachem Magidor for supplying them with the idea of the proof of Theorem~\ref{theorem.pull_back_ISP}, that is, Claim~\ref{claim.magidor}.
They furthermore want to thank Mauro Di Nasso for an invitation to discuss this material at a one week workshop in Pisa.

\section{The principles {\sffamily TP}, {\sffamily SP}, {\sffamily ITP}, and {\sffamily ISP}}\label{sect.principles}

We recall the necessary definitions from~\cite{weiss}.
Let us call a sequence $\langle d_a\ |\ a \in P_\kappa \lambda \rangle$ a \emph{$P_\kappa \lambda$-list} if $d_a \subset a$ for all $a \in P_\kappa \lambda$.

\begin{definition}\label{def.P_kappa_lambda.thin}
	Let $D = \langle d_a\ |\ a \in P_\kappa \lambda \rangle$ be a $P_\kappa \lambda$-list.
	\begin{itemize}
		\item $D$ is called \emph{thin} if there is a club $C \subset P_\kappa \lambda$ such that $| \{ d_a \cap c\ |\ c \subset a \in P_\kappa \lambda \} | < \kappa$ for every $c \in C$.
		\item $D$ is called \emph{slender} if for every sufficiently large $\theta$ there is a club $C \subset P_\kappa H_\theta$ such that $d_{M \cap \lambda} \cap b \in M$ for all $M \in C$ and all $b \in M \cap P_{\omega_1} \lambda$.
	\end{itemize}
\end{definition}
Note that if $D$ is a thin list, then $D$ is slender.

\begin{definition}\label{def.ineffable_branch}
	Let $D = \langle d_a\ |\ a \in P_\kappa \lambda \rangle$ be a $P_\kappa \lambda$-list and $d \subset \lambda$.
	\begin{itemize}
		\item $d$ is called a \emph{cofinal branch of $D$} if for all $a \in P_\kappa \lambda$ there is $z_a \in P_\kappa \lambda$ such that $a \subset z_a$ and $d \cap a = d_{z_a} \cap a$.
		\item $d$ is called an \emph{ineffable branch of $D$} if there is a stationary set $S \subset P_\kappa \lambda$ such that $d \cap a = d_a$ for all $a \in S$.
	\end{itemize}
\end{definition}

\begin{definition}
	\begin{itemize}
		\item $\TP(\kappa, \lambda)$ holds if every thin $P_\kappa \lambda$-list has a cofinal branch.
		\item $\SP(\kappa, \lambda)$ holds if every slender $P_\kappa \lambda$-list has a cofinal branch.
		\item $\ITP(\kappa, \lambda)$ holds if every thin $P_\kappa \lambda$-list has an ineffable branch.
		\item $\ISP(\kappa, \lambda)$ holds if every slender $P_\kappa \lambda$-list has an ineffable branch.
	\end{itemize}
	We let $\TP(\kappa)$ abbreviate the statement that $\TP(\kappa, \lambda)$ holds for all $\lambda \geq \kappa$, and similarly for the other principles.
\end{definition}

These definitions admit different ways of defining strong compactness and supercompactness.
\begin{theorem}\label{theorem.TP<->stronglycompact}
	Suppose $\kappa$ is inaccessible.
	Then $\kappa$ is strongly compact if and only if\/ $\TP(\kappa)$ holds.
\end{theorem}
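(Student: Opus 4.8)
The plan is to prove the two implications separately, and to start with the observation that, because $\kappa$ is inaccessible, \emph{every} $P_\kappa\lambda$-list is thin: for any $c\in P_\kappa\lambda$ the set $\{\,d_a\cap c\mid c\subseteq a\in P_\kappa\lambda\,\}$ consists of subsets of $c$, hence has size at most $2^{|c|}<\kappa$. So under our hypothesis $\TP(\kappa)$ says exactly that every $P_\kappa\lambda$-list, for every $\lambda\geq\kappa$, has a cofinal branch, and that is the form I would use. I would also use the standard characterization that $\kappa$ is strongly compact if and only if for every $\lambda\geq\kappa$ there is a fine $\kappa$-complete ultrafilter on $P_\kappa\lambda$.

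For the implication from strong compactness to $\TP(\kappa)$, fix $\lambda$, a fine $\kappa$-complete ultrafilter $U$ on $P_\kappa\lambda$, and an arbitrary $P_\kappa\lambda$-list $D=\langle d_a\mid a\in P_\kappa\lambda\rangle$. I would set
\[
	e\coloneqq\{\,\xi<\lambda\mid\{\,a\in P_\kappa\lambda\mid\xi\in d_a\,\}\in U\,\}
\]
and check that $e$ is a cofinal branch: given $a\in P_\kappa\lambda$, for each $\xi\in a$ put $B_\xi\coloneqq\{\,b\mid\xi\in d_b\,\}$ if $\xi\in e$ and let $B_\xi$ be its complement otherwise, so that $B_\xi\in U$; since $U$ is fine and $\kappa$-complete and $|a|<\kappa$, the set $\{\,b\mid a\subseteq b\,\}\cap\bigcap_{\xi\in a}B_\xi$ is in $U$, hence nonempty, and any $z_a$ in it witnesses $a\subseteq z_a$ and $d_{z_a}\cap a=e\cap a$. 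This direction is a short computation and I expect no difficulty.

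For the converse, assume $\kappa$ is inaccessible and that every $P_\kappa\lambda$-list has a cofinal branch. Fixing $\lambda\geq\kappa$, I would build a fine $\kappa$-complete ultrafilter on $P_\kappa\lambda$ as follows. Let $\mu\coloneqq 2^{\lambda^{<\kappa}}=|P(P_\kappa\lambda)|$, and fix a bijective enumeration $\langle A_\alpha\mid\alpha<\mu\rangle$ of $P(P_\kappa\lambda)$ arranged so that $A_\xi=\{\,a\in P_\kappa\lambda\mid\xi\in a\,\}$ for every $\xi<\lambda$ (possible since $\lambda\leq\mu$ and $\xi\mapsto A_\xi$ is injective). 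Apply the hypothesis to the $P_\kappa\mu$-list $D=\langle d_x\mid x\in P_\kappa\mu\rangle$ given by
\[
	d_x\coloneqq\{\,\alpha\in x\mid x\cap\lambda\in A_\alpha\,\},
\]
obtaining a cofinal branch $e\subseteq\mu$, and set $U\coloneqq\{\,A_\alpha\mid\alpha\in e\,\}$. Each defining property of a fine $\kappa$-complete ultrafilter is then verified by choosing a suitable small $a\in P_\kappa\mu$, invoking a witness $z_a\supseteq a$ with $e\cap a=d_{z_a}\cap a$, and using that $z_a\cap\lambda$ is an actual element of $P_\kappa\lambda$: for ultrafilterness take $a=\{\gamma,\gamma'\}$ with $A_{\gamma'}=P_\kappa\lambda\setminus A_\gamma$, so that exactly one of $\gamma,\gamma'$ lies in $e$; for $\kappa$-completeness, given $\langle A_{\gamma_i}\mid i<\delta\rangle$ from $U$ with $\delta<\kappa$, take $a=\{\gamma_i\mid i<\delta\}\cup\{\gamma^*\}$ where $A_{\gamma^*}=\bigcap_{i<\delta}A_{\gamma_i}$; the remaining filter axioms similarly; and, crucially, for fineness take $a=\{\xi\}$, noting that $z_a\supseteq\{\xi\}$ forces $\xi\in z_a\cap\lambda$ and hence $z_a\cap\lambda\in A_\xi$, so $\xi\in e$. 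Since $\lambda$ was arbitrary, this yields strong compactness of $\kappa$.

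The step that carries the weight in the converse is the design of the list $D$ together with the enumeration convention $A_\xi=\{\,a\mid\xi\in a\,\}$; this is what makes the cofinal branch glue together into a \emph{fine} ultrafilter rather than a principal one, and it avoids having to restrict $D$ to, and then patch it off of, a club of elementary submodels. The only use of inaccessibility in this direction is the initial remark that it renders thinness automatic, so that these ``fat'' lists are legitimate inputs to $\TP(\kappa)$.
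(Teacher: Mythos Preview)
The paper does not actually supply a proof of this theorem; it is stated as a known characterization, recalled from \cite{weiss}. So there is no ``paper's own proof'' to compare against line by line.

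Your argument is correct and is essentially the standard one. The observation that inaccessibility makes every $P_\kappa\lambda$-list thin is exactly the right starting point, and both directions go through as you describe. In the forward direction the branch $e$ you define is precisely the ``average'' of the list along the fine measure, and the verification is routine. In the converse, encoding $P(P_\kappa\lambda)$ by ordinals below $\mu$ and defining $d_x$ as the set of indices $\alpha\in x$ whose $A_\alpha$ contains the ``point'' $x\cap\lambda$ is the natural move; the cofinal branch then reads off an ultrafilter, and your checks of properness, upward closure, the ultra property, $\kappa$-completeness, and fineness are all sound. The enumeration trick $A_\xi=\{\,a\mid\xi\in a\,\}$ for $\xi<\lambda$ is a clean way to secure fineness without passing through elementary submodels, and it works because any witness $z_a\supseteq\{\xi\}$ automatically has $\xi\in z_a\cap\lambda$.

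One minor cosmetic point: you might note explicitly that the enumeration being a bijection ensures $U$ is well defined as a family of sets (no ambiguity from repeated $A_\alpha$'s), and that $P_\kappa\lambda\in U$ follows by the same one-point test you use for fineness. Otherwise the write-up is complete.
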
 
\begin{theorem}\label{theorem.ITP<->supercompact}
	Suppose $\kappa$ is inaccessible.
	Then $\kappa$ is supercompact if and only if\/ $\ITP(\kappa)$ holds.
\end{theorem}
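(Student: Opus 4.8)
The plan is to prove both directions; the forward one is a short ultrapower computation, the converse is where the real work lies.

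\emph{Supercompact $\Rightarrow\ITP(\kappa)$.} Fix $\lambda\ge\kappa$, let $U$ be a normal fine $\kappa$-complete ultrafilter on $P_\kappa\lambda$, and let $j\colon V\to M$ be the associated ultrapower embedding; recall that $j$ has critical point $\kappa$, that $j(\kappa)>\lambda$, that $M^\lambda\subset M$, and that $[\id]_U=j[\lambda]:=\{j(\alpha)\ |\ \alpha<\lambda\}\in M$. Let $D=\langle d_a\ |\ a\in P_\kappa\lambda\rangle$ be \emph{any} $P_\kappa\lambda$-list. In $M$, $j(D)$ is a $P_{j(\kappa)}j(\lambda)$-list and $j[\lambda]\in P_{j(\kappa)}j(\lambda)$, so $d^\ast:=j(D)_{j[\lambda]}\subset j[\lambda]$ is defined; put $d:=\{\alpha<\lambda\ |\ j(\alpha)\in d^\ast\}\subset\lambda$. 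I would check that $j(d)\cap j[\lambda]=d^\ast$, so that $j[\lambda]\in j\bigl(\{a\ |\ d\cap a=d_a\}\bigr)$; since $[\id]_U=j[\lambda]$, this says $\{a\ |\ d\cap a=d_a\}\in U$, hence it is stationary and $d$ is an ineffable branch of $D$. Thus every $P_\kappa\lambda$-list, in particular every thin one, has an ineffable branch, and letting $\lambda$ vary gives $\ITP(\kappa)$.

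\emph{$\ITP(\kappa)\Rightarrow$ supercompact.} The key preliminary remark is that, $\kappa$ being inaccessible, \emph{every} $P_\kappa\lambda$-list is automatically thin: for $c\in P_\kappa\lambda$ the set $\{d_a\cap c\ |\ c\subset a\in P_\kappa\lambda\}$ consists of subsets of $c$, so has size $\le 2^{|c|}<\kappa$, and $C=P_\kappa\lambda$ witnesses thinness. Hence under our hypothesis every $P_\kappa\lambda$-list has an ineffable branch, and it suffices, for an arbitrary $\lambda\ge\kappa$, to produce from this a normal fine $\kappa$-complete ultrafilter on $P_\kappa\lambda$, which witnesses that $\kappa$ is $\lambda$-supercompact.

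To that end, set $\mu:=2^{\lambda^{<\kappa}}$; since $P_\kappa\lambda$ has $\lambda^{<\kappa}$ elements it has exactly $\mu$ subsets, so fix a bijection $e$ from $\mu$ onto $\{X\ |\ X\subset P_\kappa\lambda\}$, and note that $\lambda<\mu$, so $a\cap\lambda\in P_\kappa\lambda$ for each $a\in P_\kappa\mu$. Define a $P_\kappa\mu$-list by
\begin{equation*}
	d_a:=\{\,\xi\in a\ |\ a\cap\lambda\in e(\xi)\,\}\subset a ,
\end{equation*}
so that $\xi\in d_a$ records whether the test point $a\cap\lambda$ of $P_\kappa\lambda$ belongs to the $\xi$-th subset. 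Applying $\ITP(\kappa,\mu)$ yields $d\subset\mu$ and a stationary $S\subset P_\kappa\mu$ with $d\cap a=d_a$ for every $a\in S$; put $U:=\{e(\xi)\ |\ \xi\in d\}$. The heart of the matter is to verify that $U$ is a normal fine $\kappa$-complete ultrafilter, and each clause follows the same recipe: intersect $S$ with an appropriate club of $P_\kappa\mu$ and read off the definition of $d_a$ at a point $a$ of the intersection, using that $e^{-1}(X)\in d_a$ iff $e^{-1}(X)\in a$ and $a\cap\lambda\in X$. For instance, given $X\subset P_\kappa\lambda$, pick $a\in S$ with $e^{-1}(X),e^{-1}(P_\kappa\lambda\setminus X)\in a$; as $a\cap\lambda$ lies in exactly one of $X,P_\kappa\lambda\setminus X$, exactly one of these two indices is in $d$, so $U$ is an ultrafilter. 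Fineness follows by also requiring $\alpha\in a$ for the given $\alpha<\lambda$; $\kappa$-completeness by taking $a\in S$ containing the fewer-than-$\kappa$ relevant indices together with the index of their intersection, which is thereby forced into $d$; and normality, for $f$ regressive on some $A\in U$, by taking $a\in S\cap C$ with $C$ the club of $a$ closed under $\alpha\mapsto e^{-1}(\{x\in A\ |\ f(x)=\alpha\})$ and containing $e^{-1}(A)$: then $a\cap\lambda\in A$, so $f(a\cap\lambda)=\alpha$ for some $\alpha\in a\cap\lambda$, whence $e^{-1}(\{x\in A\ |\ f(x)=\alpha\})\in d$, i.e.\ $f$ is constant on a set in $U$. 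So $\kappa$ is $\lambda$-supercompact for all $\lambda$, hence supercompact.

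I expect the real obstacle to be this converse, specifically the bookkeeping behind the coding: choosing $\mu$ large enough to see all subsets of $P_\kappa\lambda$, arranging that each $a\in P_\kappa\mu$ canonically determines the test point $a\cap\lambda$, and --- most delicately --- confirming that the stationary coherence supplied by the ineffable branch is enough to make the resulting ultrafilter \emph{normal}; this last step genuinely needs the stationary set, not just $\kappa$-completeness, because the regressive-function argument diagonalizes against $\lambda\ge\kappa$ many sets at once. Verifying that the auxiliary subsets of $P_\kappa\mu$ invoked above are indeed club, and the cardinal arithmetic $|P_\kappa\lambda|=\lambda^{<\kappa}$, are routine.
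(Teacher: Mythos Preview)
Your proof is correct. Note, however, that the paper does not actually contain a proof of this theorem: it is stated in Section~\ref{sect.principles} as a result recalled from~\cite{weiss}, so there is no ``paper's own proof'' to compare against here.

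That said, your argument is the standard one and is essentially what appears in~\cite{weiss} (and, ultimately, goes back to Magidor's characterization of supercompactness via ineffability on $P_\kappa\lambda$). The forward direction is the routine ultrapower computation you give. For the converse, your key observation---that inaccessibility of $\kappa$ makes \emph{every} $P_\kappa\lambda$-list thin, so $\ITP(\kappa,\lambda)$ yields an ineffable branch for arbitrary lists---is exactly the point. Your coding via $\mu=2^{\lambda^{<\kappa}}$ and a bijection $e\colon\mu\to P(P_\kappa\lambda)$, with the list $d_a=\{\xi\in a\ |\ a\cap\lambda\in e(\xi)\}$, is the standard device for reading off a normal fine measure from an ineffable branch; the verifications of the ultrafilter, fineness, $\kappa$-completeness, and normality clauses are as you sketch them, and the auxiliary sets you intersect $S$ with are indeed clubs in $P_\kappa\mu$. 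One small cosmetic point: in the normality clause you should also explicitly ensure $e^{-1}(A)\in a$ so that $e^{-1}(A)\in d\cap a=d_a$ gives $a\cap\lambda\in A$; you do say this, but it is the step that makes the rest go through.
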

Unlike other characterizations however, by~\cite{weiss} the principles \ITP\ and \ISP\ also make sense for small cardinals.

There exist ideals and filters naturally associated to the principles \ITP\ and \ISP.
\begin{definition}
	Let $A \subset P_\kappa \lambda$ and let $D = \langle d_a\ |\ a \in P_\kappa \lambda \rangle$ be a $P_\kappa \lambda$-list.
	$D$ is called \emph{$A$-effable} if for every $S \subset A$ that is stationary in $P_\kappa \lambda$ there are $a, b \in S$ such that $a \subset b$ and	$d_a \neq d_b \cap a$.
	$D$ is called \emph{effable} if it is $P_\kappa \lambda$-effable.
\end{definition}
\begin{definition}
	We let
	\begin{align*}
		I_\IT[\kappa, \lambda] & \coloneqq \{ A \subset P_\kappa \lambda\ |\ \text{there exists a thin $A$-effable $P_\kappa \lambda$-list} \},\\
		I_\IS[\kappa, \lambda] & \coloneqq \{ A \subset P_\kappa \lambda\ |\ \text{there exists a slender $A$-effable $P_\kappa \lambda$-list} \}.
	\end{align*}
	By $F_\IT[\kappa, \lambda]$ and $F_\IS[\kappa, \lambda]$ we denote the filters associated to $I_\IT[\kappa, \lambda]$
	and $I_\IS[\kappa, \lambda]$ respectively.
\end{definition}
The ideals $I_\IT[\kappa, \lambda]$ and $I_\IS[\kappa, \lambda]$ are normal ideals on $P_\kappa \lambda$ by~\cite{weiss}.

\section{Guessing models}\label{sect.guessing}

We now introduce the concept of a \emph{guessing model} which gives an alternative presentation of the principle \ISP. 

\begin{definition}
	Let $M \prec H_\theta$ for some large enough $\theta$.
	\begin{itemize}
		\item A set $d$ is called $M$-\emph{approximated} if $d \cap b \in M$ for all $b \in M \cap P_{\omega_1} M$.
		\item A set $d$ is called $M$-\emph{guessed} if there is an $e \in M$ such that $d \cap M = e \cap M$.
	\end{itemize}
	$M$ is called \emph{$z$-guessing} if every $M$-approximated $d \subset z$ is $M$-guessed.
	$M$ is called \emph{guessing} if for all $z \in M$, $M$ is $z$-guessing.
\end{definition}
Note that since for every $z \in M$ there is a bijection $f: z \to \rho$ in $M$ for some ordinal $\rho$, it holds that $M$ is guessing if and only if $M$ is $\rho$-guessing for all $\rho \in M$.
Also note that since $M$ cannot be $\sup (M \cap \Ord)$-guessing, any ordinal $\rho$ such that $M$ is $\rho$-guessing has to be bounded by $\sup(M\cap \Ord)$.

Define
\begin{align*}
	\mathcal{G}^z_\kappa X & \coloneqq \{ M \in P'_\kappa X\ |\ \text{$M$ is $z$-guessing} \},\\
	\mathcal{G}_\kappa X & \coloneqq \{ M \in P'_\kappa X\ |\ \text{$M$ is guessing} \}.
\end{align*}

\begin{proposition}\label{prop.guessingmodels}
	If\/ $\ISP(\kappa, |H_\theta|)$ holds, then $\mathcal{G}_\kappa H_\theta$ is stationary.
\end{proposition}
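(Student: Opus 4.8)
The plan is to prove the contrapositive by a diagonalization. Assume $\mathcal{G}_\kappa H_\theta$ is not stationary; I will build a slender $P_\kappa H_\theta$-list with no ineffable branch, which contradicts $\ISP(\kappa,|H_\theta|)$ (these principles are stated for $P_\kappa\lambda$-lists with $\lambda$ a cardinal, but transport routinely along a bijection $H_\theta\leftrightarrow|H_\theta|$, so I work with $P_\kappa H_\theta$-lists throughout). Fix a club $C\subseteq P'_\kappa H_\theta$ disjoint from $\mathcal{G}_\kappa H_\theta$, which exists because $P'_\kappa H_\theta$ itself contains a club; shrinking $C$, I may assume each $M\in C$ is an elementary submodel $M\prec\langle H_\theta,\in,<^*\rangle$ with $\kappa,\theta\in M$, where $<^*$ is a fixed wellorder. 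Every $M\in C$ is non-guessing, so by the remarks following the definition of guessing models there is an ordinal $\rho_M\in M$ for which $M$ is not $\rho_M$-guessing; fix an $M$-approximated $e_M\subseteq\rho_M$ that is not $M$-guessed, and — since replacing $e_M$ by $e_M\cap M$ changes neither of these properties — assume further that $e_M\subseteq M$. Define the $P_\kappa H_\theta$-list $D=\langle d_a\mid a\in P_\kappa H_\theta\rangle$ by $d_a:=e_a$ for $a\in C$ and $d_a:=\emptyset$ otherwise.

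Next I would check that $D$ is slender. Given a large enough $\theta'$, let $C'$ be the club of $N\in P_\kappa H_{\theta'}$ with $N\prec\langle H_{\theta'},\in,<^*,D,C,H_\theta\rangle$; fix $N\in C'$ and $b\in N\cap P_{\omega_1}H_\theta$, and put $M:=N\cap H_\theta$. If $M\notin C$ there is nothing to prove since $d_M=\emptyset$. If $M\in C$, then $b$ is countable and $b\in N\prec H_{\theta'}$, so an enumeration of $b$ lying in $N$ witnesses $b\subseteq N$; hence $b\in M\cap P_{\omega_1}M$, and the $M$-approximatedness of $e_M$ gives $d_M\cap b=e_M\cap b\in M\subseteq N$. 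Thus $D$ is slender.

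Now $\ISP(\kappa,|H_\theta|)$ yields an ineffable branch of $D$: a set $d\subseteq H_\theta$ together with a stationary $S\subseteq P_\kappa H_\theta$ such that $d\cap a=d_a$ for every $a\in S$. Since $C$ is a club, $S\cap C$ is stationary; using the standard fact that a stationary subset $E$ of $P_\kappa H_\theta$ lifts to the stationary set $\{N\in P_\kappa H_{\theta'}\mid N\cap H_\theta\in E\}$, I can choose (for $\theta'$ large) $N\prec\langle H_{\theta'},\in,<^*,d,D,C,S\rangle$ with $M:=N\cap H_\theta\in S\cap C$. This $M$ gives the contradiction. Because $M\in C$, the set $e_M$ is an $M$-approximated subset of $\rho_M$ that is not $M$-guessed. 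But $M\in S$ gives $d\cap M=d_M=e_M$; moreover $\rho_M\in M\subseteq N$ and $d\in N$, so $d\cap\rho_M\in N$, and since $d\cap\rho_M$ is a subset of the ordinal $\rho_M<\theta$ it is a member of $H_\theta$, so in fact $d\cap\rho_M\in N\cap H_\theta=M$. Finally $(d\cap\rho_M)\cap M=d\cap M\cap\rho_M=e_M\cap\rho_M=e_M$, so $d\cap\rho_M$ is a member of $M$ witnessing that $e_M$ is $M$-guessed — contradiction. Hence $\mathcal{G}_\kappa H_\theta$ is stationary.

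The one genuinely delicate point is the bookkeeping that lets $M=N\cap H_\theta$ be at once a member of $C$ (so that $M$ is non-guessing, with witness $e_M$) and of $S$ (so that $d\cap M=d_M$). Since the ineffable branch comes with a stationary, not club, set $S$, it is essential to keep $C$ a genuine club; in particular one must resist thinning $C$ — for instance by a Fodor argument forcing the ordinals $\rho_M$ to be constant — because that could empty out $S\cap C$. What makes the argument close without any such thinning is the observation that $d\cap\rho_M$ automatically belongs to $H_\theta$, hence to $M$ as soon as $d$ and $\rho_M$ do, irrespective of the size of $\rho_M$. The remaining ingredients — the stationary lifting, the bijective transfer of $\ISP$, and the facts that $P'_\kappa H_\theta$ contains a club and that $N\cap H_\theta\prec H_\theta$ — are routine.
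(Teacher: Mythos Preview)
Your proof is correct and follows the same architecture as the paper's: assume a club $C$ of non-guessing models, package the witnesses $e_M$ into a slender list, apply $\ISP$ to obtain an ineffable branch $d$ with stationary $S$, and derive a contradiction. The only real divergence is in the endgame. The paper applies Fodor \emph{to $S$} (not to $C$) to fix $z_M=z$ on a stationary subset; then $d\subset z$, so $d\in Pz\subset H_\theta$, and one simply picks $M\in S$ with $d\in M$ to see that $d$ itself guesses $d_M$. Your route instead lifts to $N\prec H_{\theta'}$ with $d\in N$ and $M=N\cap H_\theta\in S\cap C$, and uses $d\cap\rho_M\in N\cap H_\theta=M$ as the guessing witness. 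Both arguments are sound; the paper's is a line shorter since no passage to $H_{\theta'}$ is needed, while yours avoids pressing down entirely. Your closing caveat about not thinning $C$ via Fodor is correct as far as it goes, but note that it does not rule out the paper's maneuver: Fodor is applied there to the stationary $S$ \emph{after} the ineffable branch has been produced, which is perfectly safe.
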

\begin{proof}
	By working with a bijection $f: |H_\theta| \to H_\theta$, it is obvious that we can apply $\ISP(\kappa, |H_\theta|)$ to the set $P_\kappa H_\theta$ directly.

	Suppose to the contrary that there is a club $C \subset P'_\kappa H_\theta$ such that every $M \in C$ is not guessing, that is, there is $z_M \in M$ and $d_M \subset z_M$ that is $M$-approximated but not $M$-guessed.
	Then also $d_M \cap M$ is $M$-approximated but not $M$-guessed, so we may assume $d_M \subset M$.
	Consider the list $D \coloneqq \langle d_M\ |\ M \in C \rangle$.
	
	Then $D$ is slender, for let $\theta'$ be large enough and let $C' \coloneqq \{ M' \in P_\kappa H_{\theta'}\ |\ M' \cap H_\theta \in C \}$.
	$C'$ is club in $P_\kappa H_\theta$, and if $M' \in C$ and $b \in P_{\omega_1} H_\theta \cap M'$, then $b \in M' \cap H_\theta$, so $d_{M' \cap H_\theta} \cap b \in M' \cap H_\theta \subset M'$.

	By $\ISP(\kappa, |H_\theta|)$, there is an ineffable branch $d$ for the list $D$.
	Let $S \coloneqq \{ M \in C\ |\ d_M = d \cap M \}$.
	$S$ is stationary, and we may assume $z_M = z$ for some fixed $z$ and all $M \in S$.
	This means $d \subset z$.
	As $Pz \subset H_\theta$, there is an $M \in S$ such that $d \in M$.
	But then $d_M$ is $M$-guessed, a contradiction.
\end{proof}

\begin{proposition}\label{prop.guessing->ISP}
	Let $\theta$ be sufficiently large and $M \in P'_\kappa H_\theta$ be a $\lambda$-guessing model such that $\lambda^+ \in M$.
	Then $\ISP(\kappa, \lambda)$ holds. 
\end{proposition}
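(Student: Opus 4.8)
The plan is to fix an arbitrary slender $P_\kappa\lambda$-list $D = \langle d_a \mid a \in P_\kappa\lambda\rangle$ and to produce an ineffable branch for it. Since $\theta$ is sufficiently large and $D$ is slender, I would fix a club $C_D \subset P_\kappa H_\theta$ such that $d_{N\cap\lambda}\cap b \in N$ for all $N \in C_D$ and all $b \in N \cap P_{\omega_1}\lambda$. Enlarging $\theta$ if necessary and re‑invoking the hypothesis, I would arrange $D \in M$, hence also $C_D \in M$ (choosing $C_D$ definably from $D$); then, by the standard fact that an elementary $M \prec H_\theta$ with $|M| < \kappa$ and $M \cap \kappa \in \Ord$ belongs to every club subset of $P_\kappa H_\theta$ that it has as an element, $M \in C_D$.

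Next, set $a_0 \coloneqq M \cap \lambda$, which lies in $P_\kappa\lambda$ since $|M| < \kappa$, and put $d \coloneqq d_{a_0}$, so $d \subset a_0 \subset M$. First I would check that $d$ is $M$-approximated: for $b \in M \cap P_{\omega_1}M$ we have $d \cap b = d \cap (b \cap \lambda)$, and $b \cap \lambda \in M \cap P_{\omega_1}\lambda$ because $b,\lambda \in M$, so $M \in C_D$ gives $d \cap b = d_{a_0}\cap(b\cap\lambda) \in M$. Since $M$ is $\lambda$-guessing and $d \subset \lambda$, there is $e \in M$ with $e \cap M = d \cap M$; replacing $e$ by $e \cap \lambda \in M$ I may take $e \subset \lambda$. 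As $d \subset M$, it follows that $e \cap a_0 = e \cap M = d \cap M = d = d_{a_0}$, so $a_0$ belongs to $S_e \coloneqq \{ a \in P_\kappa\lambda \mid e \cap a = d_a \}$.

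It then remains to see that $e$ is an ineffable branch of $D$, i.e.\ that $S_e$ is stationary. Here $S_e \in M$, since $S_e$ is definable from $D$, $e$, $\lambda$, $\kappa$, all of which lie in $M$ — this is where $\lambda^+ \in M$ is used, namely to put $\lambda$ into $M$ and to guarantee that $M$ reflects correctly whether a subset of $P_\kappa\lambda$ is stationary. If $S_e$ were non-stationary, then by elementarity there would be a club $C' \in M$ disjoint from $S_e$; but by the same standard fact as above, $a_0 = M \cap \lambda \in C'$, contradicting $a_0 \in S_e$. Hence $S_e$ is stationary, $e$ is an ineffable branch of $D$, and since $D$ was an arbitrary slender $P_\kappa\lambda$-list, $\ISP(\kappa,\lambda)$ holds.

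The step needing the most care is how the single guessing model from the hypothesis is made to contain the given list $D$ (so that $C_D \in M$ and $M \in C_D$): one fixes $\theta$ large after choosing $D$ and exploits elementarity in some $H_{\theta'}$ together with the fact that the $\lambda$-guessing property is robust enough to reflect down with the needed parameters — or, in the intended application, one simply produces for each slender list a guessing model having it as an element. Granting this, the combinatorial heart — that any $M$-guess of $d_{M\cap\lambda}$ is forced to be an ineffable branch — is short, as above.
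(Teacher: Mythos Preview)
Your combinatorial core---showing that $d_{M\cap\lambda}$ is $M$-approximated, guessing it by some $e\in M$, and then verifying that $S_e=\{a\in P_\kappa\lambda\mid d_a=e\cap a\}$ is stationary via the witness $M\cap\lambda$---is exactly what the paper does. But there is a genuine gap in the setup, which you yourself flag at the end without resolving.

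The hypothesis gives you a single fixed $M$; you cannot ``enlarge $\theta$'' or ``re-invoke the hypothesis'' after an arbitrary slender list $D$ is handed to you. Your suggested work-arounds (reflect the guessing property down, or produce a new model for each list) are not available from the statement as written. The paper's fix is a one-line elementarity move you are missing: since $M\prec H_\theta$ and $\theta$ is large enough to see whether $\ISP(\kappa,\lambda)$ holds, it suffices to show $M\models\ISP(\kappa,\lambda)$, and for that one only needs to treat lists $D\in M$. Once $D\in M$ is granted for free, your argument goes through.

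A second, related error is where you place the slenderness club. You take $C_D\subset P_\kappa H_\theta$ and then want $C_D\in M$; but a club in $P_\kappa H_\theta$ is not an element of $H_\theta$, so it cannot lie in $M\prec H_\theta$. The paper instead uses the fact that slenderness is witnessed already by a club $C'\subset P_\kappa H_{\lambda^+}$, and \emph{this} is the real role of the hypothesis $\lambda^+\in M$: it puts $H_{\lambda^+}$ into $M$, so by elementarity such a $C'$ can be chosen in $M$, and then $M\cap H_{\lambda^+}\in C'$. Your stated reason for needing $\lambda^+\in M$ (correct reflection of stationarity of subsets of $P_\kappa\lambda$) is not where it is used; that reflection is automatic from $M\prec H_\theta$ with $\theta$ sufficiently large.
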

\begin{proof}
	Since $M\prec H_\theta$ it is enough to show that $M \models \ISP(\kappa, \lambda)$.
	So pick a slender list $D = \langle d_a\ |\ a \in P_\kappa \lambda \rangle \in M$.
	Notice that the slenderness of $D$ is witnessed by a club $C' \subset P_\kappa H_{\lambda^+}$ which is in $M$.
	Then $M \cap H_{\lambda^+} \in C'$, so $d_{M \cap \lambda} \cap b \in M$ for all $b \in M \cap P_{\omega_1} \lambda$.
	This means $d_{M \cap \lambda}$ is an $M$-approximated subset of $M$.
	So since $M$ is a $\lambda$-guessing model, there is an $e \in M$ such that $e \cap M = d_{M \cap \lambda}$.

	Let $S \coloneqq \{ a \in P_\kappa \lambda\ |\ d_a = e \cap a \}$.
	Then $S \in M$.
	To see $S$ is stationary, let $C \in M$ be a club in $P_\kappa \lambda$.
	Then $M \cap \lambda \in C \cap S$, so $H_\theta \models C \cap S \neq \emptyset$, so it also holds in $M$.
\end{proof}

Notice that we cannot literally say that $F_\IS[\kappa, H_\theta]$ is the club filter restricted to $\mathcal{G}_\kappa H_\theta$:
There might be a slender list $\langle d_M\ |\ M\in S\rangle$ indexed by some stationary set $S \subset \mathcal{G}_\kappa H_\theta$ that does not have an ineffable branch.
For such a list we necessarily have that $d_M \not\subset z$ for all $z \in M$ and all $M \in S$.
Still the following holds.
\begin{proposition}
	$I_\IS[\kappa, X]$ is contained in the projection of the nonstationary ideal restricted to $\mathcal{G}_\kappa^X H_\theta$ onto $X$ for any regular $\theta$ such that $X \in H_\theta$.
\end{proposition}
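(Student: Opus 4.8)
The plan is to unwind both sides of the inclusion and then run a pressing‑down argument parallel to the one in Proposition~\ref{prop.guessingmodels}. Fix a regular $\theta$ with $X \in H_\theta$ and suppose $A$ lies in $I_\IS[\kappa, X]$, witnessed by a slender $A$-effable $P_\kappa X$-list $D = \langle d_a\ |\ a \in P_\kappa X\rangle$. The projection onto $X$ of the nonstationary ideal of $P_\kappa H_\theta$ restricted to $\mathcal G_\kappa^X H_\theta$ is by definition the collection of all $A \subset P_\kappa X$ for which $\{ M \in \mathcal G_\kappa^X H_\theta\ |\ M \cap X \in A \}$ is nonstationary in $P_\kappa H_\theta$, so it suffices to produce a club $E \subset P_\kappa H_\theta$ none of whose members $M$ is simultaneously $X$-guessing and satisfies $M \cap X \in A$.

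First I would move to an auxiliary level witnessing slenderness. Pick $\theta' \geq \theta$ large enough that slenderness of $D$ is witnessed by a club $C \subset P_\kappa H_{\theta'}$ and that $H_\theta \in H_{\theta'}$, and, shrinking $C$, assume $C \subset P'_\kappa H_{\theta'}$ with $X, A, D, H_\theta \in M'$ for every $M' \in C$. Let $E_0 \coloneqq \{ M' \cap H_\theta\ |\ M' \in C \}$; by the standard projection lemma for clubs $E_0$ is club in $P_\kappa H_\theta$, and $M' \cap H_\theta \in P'_\kappa H_\theta$ since $H_\theta \in M' \prec H_{\theta'}$. The one point that needs care is that for $M = M' \cap H_\theta \in E_0$ the entry $d_{M \cap X}$ is $M$-approximated, i.e. $d_{M\cap X}\cap b \in M$ for every $b \in M\cap P_{\omega_1}M$, and not merely for $b \in M\cap P_{\omega_1}X$ as slenderness directly supplies: since $X \subset H_\theta$ we have $M \cap X = M' \cap X$, and given $b \in M \cap P_{\omega_1} M$ the set $b \cap X$ lies in $M' \cap P_{\omega_1} X$, so $d_{M \cap X} \cap b = d_{M' \cap X} \cap (b \cap X) \in M'$ by slenderness; being countable this set lies in $H_\theta$, hence in $M' \cap H_\theta = M$.

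Now suppose toward a contradiction that $T \coloneqq \{ M \in E_0\ |\ M \text{ is } X\text{-guessing and } M \cap X \in A \}$ is stationary. For $M \in T$ the set $d_{M \cap X} \subset X$ is $M$-approximated and $M$ is $X$-guessing, so $d_{M\cap X}$ is $M$-guessed; intersecting a guess with $X \in M$, fix $e_M \in M$ with $e_M \subset X$ and $e_M \cap M = d_{M \cap X}$, so in particular $e_M \cap (M \cap X) = d_{M \cap X}$. As $M \mapsto e_M$ is regressive on the stationary set $T$, normality of the nonstationary ideal on $P_\kappa H_\theta$ yields a stationary $S' \subset T$ and a single $e \subset X$ with $e_M = e$ for all $M \in S'$. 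Set $S \coloneqq \{ M \cap X\ |\ M \in S' \}$. Then $S \subset A$, $S$ is stationary in $P_\kappa X$ as the projection of the stationary set $S'$, and $d_a = e \cap a$ for every $a \in S$; hence $d_a = d_b \cap a$ whenever $a \subset b$ lie in $S$, and such pairs exist because $S$ is cofinal. This contradicts the $A$-effability of $D$, so $T$ is nonstationary; since $E_0$ is club, $\{ M \in \mathcal G_\kappa^X H_\theta\ |\ M \cap X \in A \}$ is nonstationary, which is exactly the statement that $A$ belongs to the projected ideal.

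The only genuinely delicate step is the bookkeeping in the second paragraph: descending from the level $\theta'$ at which slenderness is available to the prescribed $\theta$, and checking that $d_{M \cap X}$ is approximated with respect to all of $M$ and not just with respect to $X$. Everything after that is the normality/pressing‑down pattern already carried out in the proof of Proposition~\ref{prop.guessingmodels}, now applied to contradict effability rather than to guess an ineffable branch.
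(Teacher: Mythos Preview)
Your argument is correct and follows the same underlying idea as the paper's proof: show that for a suitable $M$ in the preimage of $A$, the entry $d_{M\cap X}$ is $M$-approximated, use $X$-guessing to find $e\in M$ with $e\cap M=d_{M\cap X}$, and then contradict $A$-effability.

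There are two minor differences worth noting. First, you are more careful about levels: the paper simply writes ``let $C$ be a club subset of $P_\kappa H_\theta$ witnessing that $D$ is slender,'' tacitly treating the given $\theta$ as already sufficiently large, whereas you pass to a $\theta'\geq\theta$ where slenderness is witnessed and then project down---this is a genuine improvement in rigor for arbitrary $\theta$ with $X\in H_\theta$. Second, the paper avoids pressing down entirely: it picks a \emph{single} $M\in S^*\cap C$ with $D$ (and $S$) in $M$, finds $e\in M$, and then argues by elementarity (as in Proposition~\ref{prop.guessing->ISP}) that $\{a\in S:d_a=e\cap a\}$ is stationary, directly contradicting $S$-effability. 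Your route via normality to stabilize $e_M$ and then project is equally valid and perhaps more explicit, but the single-model argument is shorter.
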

\begin{proof}
	Assume to the contrary that there is an $S \in I_\IS[\kappa, X]$ such that $S^* \coloneqq \{ M \in \mathcal{G}_\kappa^X H_\theta\ |\  M\cap X\in S\}$ is stationary.
	Pick a slender list $D = \langle d_a\ |\ a\in S\rangle$ witnessing that $S \in I_\IS[\kappa, X]$.
	Let $C$ be a club subset of $P_\kappa H_\theta$ witnessing that $D$ is slender.
	Pick $M \in S^*\cap C$ such that $D\in M$.
	Then $d_{M\cap X}$ is an $M$-approximated subset of $X$ as $M\in C$.
	Thus $d_{M\cap X} = e \cap M$ for some $e \in M$ since $M$ is $X$-guessing.
	As in the proof of Proposition~\ref{prop.guessing->ISP} it follows that $e$ is an ineffable branch for $D$, contradicting the fact that $D$ witnesses $S \in I_\IS[\kappa, X]$.
\end{proof}

\section{Implications under {\sffamily PFA}}\label{sect.PFA}

In this section, we are going to show \PFA\ implies $\ISP(\omega_2)$.

The following lemma is due to Woodin~\cite[Proof of Theorem~2.53]{woodin}.
Recall that $G \subset \mathbb{P}$ is said to be \emph{$M$-generic}
if $G$ is a filter on $\mathbb{P}$ and $G \cap D \cap M \neq \emptyset$ for all $D \in M$ that are dense in $\mathbb{P}$.
\begin{lemma}\label{lemma.PFA_stationarily_often}
	Let $\mathbb{P}$ be a proper forcing, and let $\theta$ be sufficiently large.
	Then \PFA\ implies
	\begin{equation*}
		\{ M \in P_{\omega_2} H_\theta\ |\ \exists G \subset \mathbb{P}\ \text{$G$ is $M$-generic} \}
	\end{equation*}
	is stationary in $P_{\omega_2} H_\theta$.
\end{lemma}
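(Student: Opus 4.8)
The plan is to use \PFA\ applied to a forcing built from $\mathbb{P}$ together with a collapse, in order to generically add an elementary submodel $M$ of size $\omega_1$ that sees a sufficiently generic filter on $\mathbb{P}$, and then to argue that the set of such $M$ is stationary because a single instance of the forcing axiom produces arbitrarily many of them below any given club.

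First I would fix a club $C \subset P_{\omega_2} H_\theta$, given by a function $f: P_\omega H_\theta \to P_{\omega_2} H_\theta$ with $\Cl_f \subset C$, and reduce to finding one $M \in \Cl_f$ that carries an $M$-generic filter for $\mathbb{P}$. To produce such an $M$, I would consider the two-step iteration $\mathbb{Q} \coloneqq \mathbb{P} * \dot{\Coll}(\omega_1, |H_\theta|)$, or more simply work with $\Coll(\omega_1, H_\theta)$ augmented so as to also choose a filter on $\mathbb{P}$: the standard trick (this is exactly Woodin's argument) is that $\Coll(\omega_1, H_\theta)$ is proper, indeed $\sigma$-closed, and generically collapses $H_\theta$ to have size $\omega_1$ while adding a continuous increasing chain $\langle M_\xi \mid \xi < \omega_1 \rangle$ of countable elementary submodels of $H_\theta$ with union all of $H_\theta^V$. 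One then wants, inside this collapse, also to thread a filter on $\mathbb{P}$ meeting all dense sets coded along the chain. So I would instead use the poset $\mathbb{R}$ whose conditions are pairs $(p, s)$ where $s$ is a countable approximation to such a chain and $p$ is a condition in $\mathbb{P}$ lying in the dense sets named so far, ordered so that a generic for $\mathbb{R}$ yields both a chain with union $H_\theta$ and a filter $G \subset \mathbb{P}$ that is $N$-generic for every $N$ on the chain. The key verification is that $\mathbb{R}$ is proper: this follows because the $\mathbb{P}$-coordinate is being chosen by a properness-style fusion argument (using that $\mathbb{P}$ is proper) while the chain-coordinate is $\sigma$-closed, and a countable elementary submodel of a large $H_{\theta'}$ containing $\mathbb{R}$, $\mathbb{P}$ is easily seen to have a master condition.

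Next, applying \PFA\ to $\mathbb{R}$ and the $\aleph_1$ many dense sets asserting (i) that the chain reaches past any prescribed point of $H_\theta$, (ii) that $f$-closure is respected, and (iii) that for each dense $D \subset \mathbb{P}$ appearing on the chain the filter meets $D$, I obtain a filter $g \subset \mathbb{R}$ yielding a chain $\langle N_\xi \mid \xi < \omega_1 \rangle$ with $M \coloneqq \bigcup_\xi N_\xi \in \Cl_f$, $|M| = \omega_1$, $M \prec H_\theta$, together with a filter $G \subset \mathbb{P}$ which is $N_\xi$-generic for all $\xi$; since every dense $D \in M$ lies in some $N_\xi$ and $G$ then meets $D \cap N_\xi \subset D \cap M$, $G$ is $M$-generic. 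As $M \in \Cl_f \subset C$, this shows the displayed set meets $C$, and since $C$ was an arbitrary club the set is stationary.

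The main obstacle, and the step that requires genuine care rather than bookkeeping, is the simultaneous construction and properness proof for $\mathbb{R}$: one must set up conditions so that building the $\mathbb{P}$-filter does not destroy properness (the $\mathbb{P}$-side needs a fusion/master-condition argument exactly as in the proof that $\mathbb{P}$ itself is proper, and the chain-side must be $\sigma$-closed so that at limit stages the partial filter still has a lower bound), and so that the $\aleph_1$ dense sets one feeds to \PFA\ genuinely capture ``$G$ is $M$-generic'' — this works because $M$ has size $\aleph_1$, so it has only $\aleph_1$ subsets appearing along the chain and in particular only $\aleph_1$ relevant dense subsets of $\mathbb{P}$ to anticipate. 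I would present this by citing Woodin's argument for the structure of $\mathbb{R}$ and its properness, and then give the short forcing-axiom application in detail, since that is where the stationarity conclusion is extracted.
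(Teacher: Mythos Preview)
The paper does not actually prove this lemma; it simply attributes the result to Woodin with a citation to \cite[Proof of Theorem~2.53]{woodin} and moves on. Your sketch is essentially a correct outline of Woodin's argument, so in that sense you and the paper agree: the poset whose conditions are pairs $(p,\vec{N})$ with $\vec{N}$ a countable continuous $\in$-chain of countable elementary submodels of $H_\theta$ (containing $\mathbb{P}$ and the parameter $f$) and $p\in\mathbb{P}$ an $(N,\mathbb{P})$-generic condition for every $N$ on the chain is proper, and applying \PFA\ to it produces the desired $M$.

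Two points in your write-up should be tightened. First, your family (i) of dense sets cannot literally be ``the chain reaches past any prescribed point of $H_\theta$,'' since that is $|H_\theta|$ many sets; what you actually need are the $\omega_1$ many sets $D_\alpha$ asserting the chain has length greater than $\alpha$. Closure under $f$ and elementarity of the union then come for free once $f$ is placed in $N_0$, since each $N_\xi\prec H_\theta$ is already $f$-closed. Second, your family (iii) is either redundant or ill-posed. In the standard formulation the requirement that $p$ be $(N_\xi,\mathbb{P})$-generic for every $N_\xi$ on the chain is built into the definition of a condition, and then the $\mathbb{P}$-projection of the \PFA-filter is automatically $M$-generic with no further dense sets needed. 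If instead you drop this requirement from the conditions and try to recover it via dense sets in (iii), you face the problem that there are $|H_\theta|$ many dense subsets of $\mathbb{P}$ that might appear along the chain, not $\omega_1$ many, so they cannot all be listed in advance. The clean fix is to put the genericity requirement into the conditions and omit (iii).
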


\begin{definition}
	Let $T$ be a tree and $B$ be a set of cofinal branches of $T$.
	A function $g: B \to T$ is called \emph{Baumgartner function} if $g$ is injective and
	for all $b, b' \in B$ it holds that
	\begin{enumerate}
		\item $g(b) \in b$,
		\item $g(b) < g(b') \rightarrow g(b') \notin b$.
	\end{enumerate}
\end{definition}

The following lemma is due to Baumgartner, see~\cite{baumgartner.PFA}.
\begin{lemma}\label{lemma.baumgartner}
	Let $T$ be a tree and $B$ be a set cofinal branches of $T$.
	Suppose $\kappa \coloneqq \height(T)$ is regular and $|B| \leq \kappa$.
	Then there is a Baumgartner function $g: B \to T$.
\end{lemma}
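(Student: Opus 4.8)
The plan is to enumerate $B = \{ b_\alpha \mid \alpha < \mu \}$, where $\mu := |B| \le \kappa$, and to let $g(b_\alpha)$ be the node of $b_\alpha$ lying on a suitably chosen level $\eta_\alpha < \kappa$. The point of $\eta_\alpha$ is that it should be high enough to distinguish $b_\alpha$ from every earlier branch: for $\beta < \alpha$, since $b_\alpha$ and $b_\beta$ are distinct cofinal branches they agree on an initial segment of levels and then split, so there is a least level $\delta_{\alpha\beta} < \kappa$ on which $b_\alpha$ and $b_\beta$ have different nodes. I would then set $\eta_\alpha := \bigl( \sup_{\beta < \alpha} \delta_{\alpha\beta} \bigr) + 1$. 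Because $\kappa$ is regular and $|\alpha| < \kappa$ for every $\alpha < \mu \le \kappa$, this supremum is below $\kappa$, hence $\eta_\alpha < \kappa = \height(T)$; since $b_\alpha$ is cofinal it has a (unique) node on level $\eta_\alpha$, and I let $g(b_\alpha)$ be that node. (This is really a direct definition rather than a recursion, since $\eta_\alpha$ depends only on the branches.)

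It then remains to check the three requirements. Requirement (1), $g(b) \in b$, holds by construction. For injectivity, if $\beta < \alpha$ and $g(b_\alpha) = g(b_\beta)$ then both nodes lie on the same level, forcing $\eta_\alpha = \eta_\beta$, so $b_\alpha$ and $b_\beta$ would have the same node on level $\eta_\alpha > \delta_{\alpha\beta}$, contradicting the choice of $\delta_{\alpha\beta}$. For requirement (2), suppose $g(b_\gamma) < g(b_\delta)$; then $\gamma \neq \delta$, and I must show that the node $g(b_\delta)$, which lies on level $\eta_\delta$, is not a node of $b_\gamma$. Since $b_\gamma$ has exactly one node on level $\eta_\delta$, this reduces to showing $\eta_\delta \ge \delta_{\gamma\delta}$. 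If $\gamma < \delta$ this is immediate from $\eta_\delta = (\sup_{\beta<\delta}\delta_{\delta\beta}) + 1 > \delta_{\delta\gamma} = \delta_{\gamma\delta}$. If $\delta < \gamma$, then the same computation at stage $\gamma$ gives $\eta_\gamma > \delta_{\gamma\delta}$, while $g(b_\gamma) < g(b_\delta)$ forces the level $\eta_\gamma$ of $g(b_\gamma)$ to lie strictly below the level $\eta_\delta$ of $g(b_\delta)$; hence $\eta_\delta > \eta_\gamma > \delta_{\gamma\delta}$.

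I do not anticipate a genuine obstacle: this is the standard ``separate the branches level by level'' argument, and the only care required is cardinal bookkeeping — it is precisely the hypothesis $|B| \le \kappa$ together with the regularity of $\kappa$ that keeps each $\sup_{\beta<\alpha}\delta_{\alpha\beta}$, and thus each $\eta_\alpha$, strictly below $\height(T)$, so that the chosen levels are legitimate. The one place needing a moment's thought is the case $\delta < \gamma$ of requirement (2), where one must exploit the strictness of $g(b_\gamma) < g(b_\delta)$ to get $\eta_\gamma < \eta_\delta$; everything else is routine.
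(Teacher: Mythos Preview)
Your proof is correct and is essentially the same argument as the paper's. The paper enumerates $B$ and sets $g(b_\alpha) \coloneqq \min\bigl(b_\alpha - \bigcup_{\beta<\alpha} b_\beta\bigr)$, which is precisely the node of $b_\alpha$ on level $\sup_{\beta<\alpha}\delta_{\alpha\beta}$; you take the node one level higher, at $\eta_\alpha = \sup_{\beta<\alpha}\delta_{\alpha\beta} + 1$, and then verify the Baumgartner properties by an explicit case split on the order of the indices rather than via the one-line ``$g(b_\alpha) < g(b_{\alpha'}) \Rightarrow g(b_\alpha)\in b_{\alpha'} \Rightarrow \alpha<\alpha' \Rightarrow g(b_{\alpha'})\notin b_\alpha$'' chain. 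The content is the same; your version is just more explicit about levels.
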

\begin{proof}
	Let $\langle b_\alpha:\alpha < \mu \rangle$ enumerate $B$, with $\mu \leq \kappa$.
	Recursively define $g$ by $g(b_\alpha) \coloneqq \min ( b_\alpha - \medcup \{ b_\beta:\beta < \alpha \} )$.
	This can be done since $\kappa$ is regular.
	Suppose $g(b_\alpha) < g(b_{\alpha'})$ for some $\alpha, \alpha' < \mu$.
	Then $g(b_{\alpha'}) \in b_{\alpha'}$, so $g(b_\alpha) \in b_{\alpha'}$, so $\alpha < \alpha'$ and thus
	$g(b_{\alpha'}) \notin b_\alpha$.
\end{proof}

Recall that a tree $T$ is said to \emph{not split at limit levels} if
for all $t, t' \in T$ such that $\height t = \height t'$ is a limit ordinal and $\{ s \in T:s < t \} = \{ s \in T:s < t' \}$
it follows that $t = t'$.
\begin{lemma}\label{lemma.like_regressive}
	Let $T$ be a tree that does not split at limit levels and suppose $B$ is a set of cofinal branches of $T$.
	Suppose $g: B \to T$ is a Baumgartner function.
	Suppose $\langle \alpha_\nu:\nu < \omega_1 \rangle$ is continuous and increasing.
	Let $\alpha \coloneqq \sup_{\nu < \omega_1} \alpha_\nu$ and $t \in T_\alpha$.
	Suppose that for all $\nu < \omega_1$ there is $b_\nu \in B$ such that $g(b_\nu) < t \restriction \alpha_\nu \in b_\nu$.
	Then there is a stationary $S \subset \omega_1$ such that $b_\nu = b_{\nu}$ for all $\nu, \nu' \in S$.
	In particular there is an $s < t$ such that $t \in g^{-1}(s)$.
\end{lemma}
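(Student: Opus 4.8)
The plan is to first produce the stationary set $S$ on which $\nu\mapsto b_\nu$ is constant, and then read off the ``in particular'' clause from the fact that $T$ does not split at limit levels.

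First I would record a monotonicity observation about the nodes $g(b_\nu)$. Since $g(b_\nu)<t\restriction\alpha_\nu<t$, each $g(b_\nu)$ is a predecessor of $t$, hence equals $t\restriction\height(g(b_\nu))$; in particular all the $g(b_\nu)$ lie on the chain of predecessors of $t$, so they are pairwise comparable and are determined by their heights. The key point is: \emph{if $\nu<\nu'$ and $\height(g(b_{\nu'}))<\alpha_\nu$, then $g(b_{\nu'})\le g(b_\nu)$.} Indeed, in that case $g(b_{\nu'})=t\restriction\height(g(b_{\nu'}))<t\restriction\alpha_\nu\in b_\nu$, and since $b_\nu$ is a chain meeting every level it contains all predecessors of $t\restriction\alpha_\nu$, so $g(b_{\nu'})\in b_\nu$; were $g(b_\nu)<g(b_{\nu'})$, clause~(2) of the definition of a Baumgartner function would give $g(b_{\nu'})\notin b_\nu$, a contradiction.

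Next I would uniformise the heights. For limit $\nu<\omega_1$, continuity of $\langle\alpha_\nu\rangle$ together with $\height(g(b_\nu))<\alpha_\nu$ yields a least $\mu<\nu$ with $\height(g(b_\nu))<\alpha_\mu$, and this assignment is regressive on the club of limit ordinals, so Fodor's lemma produces a stationary $S_0\subset\omega_1$ and a fixed $\mu_0$ with $\height(g(b_\nu))<\alpha_{\mu_0}$ for all $\nu\in S_0$. Put $S_0'\coloneqq S_0\setminus(\mu_0+1)$. For any $\nu<\nu'$ in $S_0'$ we have $\height(g(b_{\nu'}))<\alpha_{\mu_0}\le\alpha_\nu$, so the observation above applies and $g(b_{\nu'})\le g(b_\nu)$; hence $\nu\mapsto\height(g(b_\nu))$ is non-increasing along the uncountable set $S_0'$ and is therefore eventually constant. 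So there are a stationary $S\subset S_0'$ and a node $s<t$ with $g(b_\nu)=s$ for all $\nu\in S$, and since $g$ is injective $b_\nu$ takes the constant value $b\coloneqq g^{-1}(s)$ on $S$. This is the required stationary set. For the ``in particular'' clause, note $t\restriction\alpha_\nu\in b$ for every $\nu\in S$, and the ordinals $\alpha_\nu$ with $\nu\in S$ are cofinal in $\alpha$ since $S$ is unbounded; as $b$ is a chain meeting every level of $T$ it follows that $t\restriction\beta\in b$ for all $\beta<\alpha$. Since $\alpha$ is a limit ordinal, the element of $b$ at level $\alpha$ has exactly the predecessors of $t$, so it equals $t$ because $T$ does not split at limit levels; thus $s<t$ and $t\in b=g^{-1}(s)$.

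I expect the only genuine obstacle to be the step just described: a priori one only knows $\height(g(b_{\nu'}))<\alpha_{\nu'}$, which is too weak to run the monotonicity observation, and nothing forces $\nu\mapsto g(b_\nu)$ to be monotone or to realise fewer than $\omega_1$ many values before the bound on the heights has been made uniform by Fodor's lemma. Once that uniformisation is in place, everything else is routine bookkeeping with chains and cofinal branches in a tree.
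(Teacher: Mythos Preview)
Your proof is correct and follows the same route as the paper: define a regressive function recording the first index at which $\height g(b_\nu)$ is dominated, apply Fodor, and then use the Baumgartner property together with injectivity of $g$ to conclude that $b_\nu$ is constant on a stationary set. The paper's write-up is terser---it simply asserts ``As $g$ is a Baumgartner function, this implies $g$ is constant on the set $\{b_\nu:\nu\in S\}$''---whereas you spell this out via the monotonicity observation; in fact once the heights are uniformly below $\alpha_{\mu_0}$ one can argue constancy directly by symmetry (if $g(b_\nu)<g(b_{\nu'})$ then $g(b_{\nu'})\in b_\nu$ since it lies below $t\restriction\alpha_\nu$, contradicting clause~(2), and likewise with the roles reversed), so the detour through ``non-increasing hence eventually constant'' is not needed, but it is harmless. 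You also supply the verification of the ``in particular'' clause using non-splitting at limit levels, which the paper omits entirely.
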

\begin{proof}
	For $\nu < \omega_1$ let $r(\nu) \coloneqq \min \{ \rho < \nu\ |\ \height g(b_\nu) < \alpha_\rho \}$.
	Then $r$ is regressive and thus constant on a stationary set $S \subset \omega_1$.
	As $g$ is a Baumgartner function, this implies $g$ is constant on the set $\{ b_\nu\ |\ \nu \in S \}$.
	But $g$ is injective, so $b_\nu = b_{\nu'}$ for $\nu, \nu' \in S$.
\end{proof}

\begin{definition}\label{def.covering_approximation}
	Let $V\subset W$ be a pair of transitive models of\/ \ZFC.
	\begin{itemize} 
		\item $(V,W)$ satisfies the $\mu$-covering property if the class $P_\mu^V V$ is cofinal in $P_\mu^W V$, that is, for every $x \in W$ with $x \subset V$ and $|x| < \mu$ there is $z \in P_\mu^V V$ such that $x \subset z$.
		\item $(V,W)$ satisfies the $\mu$-approximation property if for all $x \in W$, $x \subset V$, it holds that if $x \cap z \in V$ for all $z \in P_\mu^V V$, then $x \in V$.
	\end{itemize}
	A forcing $\mathbb{P}$ is said to satisfy the $\mu$-covering property or the $\mu$-approximation property if for every $V$-generic $G \subset \mathbb{P}$ the pair $(V, V[G])$ satisfies the $\mu$-covering property or the $\mu$-approximation property respectively.
\end{definition}
These properties have been introduced and extensively studied by Hamkins, see for example~\cite{hamkins}.

The following lemma is the essential argument in the proof of Theorem~\ref{theorem.PFA->ISP}.
Extracting it has the advantage that it can be applied to a wider class of different forcings, so that it can yield more information about the nature of the guessing models and $I_\IS[\omega_2, \lambda]$.
\begin{lemma}\label{lemma.PFA->ISP}
	Let $\theta$ be sufficiently large.
	Assume $\mathbb{P}$ satisfies the $\omega_1$-covering and the $\omega_1$-approximation properties and collapses $2^\lambda$ to $\omega_1$. 
	Then in $V^{\mathbb{P}}$ there is a ccc forcing $\dot{\mathbb{Q}}$ and some $w \in H_\theta$ such that
	\begin{equation*}
		\{ M \in P'_{\omega_2} H_\theta\ |\ w \in M,\ \exists G \subset \mathbb{P} * \dot{\mathbb{Q}}\ \text{$G$ is $M$-generic} \} \subset \mathcal{G}_\kappa^\lambda H_\theta,
	\end{equation*}
	and every such $M$ is internally unbounded, that is, $M \cap P_{\omega_1} M$ is cofinal in $P_{\omega_1} M$.
\end{lemma}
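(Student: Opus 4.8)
The plan is to build, inside $V^{\mathbb P}$, a ccc forcing $\dot{\mathbb Q}$ whose purpose is to "seal" a potential counterexample to guessing so that having an $M$-generic filter for $\mathbb P * \dot{\mathbb Q}$ forces $M$ to be $\lambda$-guessing. First I would set up the bookkeeping: since $\mathbb P$ collapses $2^\lambda$ to $\omega_1$, in $V^{\mathbb P}$ there are only $\omega_1$-many subsets of $\lambda$, so we may fix in $V^{\mathbb P}$ an enumeration $\langle e_\xi : \xi < \omega_1\rangle$ of $P\lambda$ (as computed in $V^{\mathbb P}$, which by the approximation property is the same as $(P\lambda)^V$ together with the new sets, but in any case has size $\omega_1$). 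The point of the $\omega_1$-approximation and $\omega_1$-covering properties is that they let us transfer $M$-approximation across $\mathbb P$: if $M \prec H_\theta$ in $V$ (or in $V^{\mathbb P}$) and $d \subset \lambda$ is $M$-approximated in the sense that $d \cap b \in M$ for all $b \in M \cap P_{\omega_1}M$, then the covering property lets us conclude $d \cap c \in V$ for appropriate countable $c$, and the approximation property then gives $d \in V$ — so the would-be witness lives in the ground model where we have more control.

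The key construction is the tree. Given a name $\dot d$ for an $M$-approximated subset of $\lambda$, I would build a tree $T$ of height $\omega_1$ (or of height $\kappa = \omega_2$, depending on indexing) whose nodes at level $\nu$ code the possible values of $\dot d \cap e_\nu$ — more precisely, using the enumeration of $P\lambda$, a branch through $T$ should code a guess $e$ with $e \cap M = d$. The tree does not split at limit levels by construction (a node is determined by its predecessors together with the coded approximation), and the cofinal branches $B$ correspond exactly to the possible guesses; because $2^\lambda$ has been collapsed, $|B| \leq \omega_1 = \height(T)$, so Lemma~\ref{lemma.baumgartner} supplies a Baumgartner function $g : B \to T$. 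Now I would let $\dot{\mathbb Q}$ be the natural ccc forcing that adds a continuous increasing sequence $\langle \alpha_\nu : \nu < \omega_1\rangle$ cofinal in $M \cap \lambda$ (or in the relevant ordinal) together with witnesses $b_\nu$ as in the hypothesis of Lemma~\ref{lemma.like_regressive}; ccc-ness comes from a $\Delta$-system argument on finite conditions, using that the tree has small levels. Having an $M$-generic filter for $\dot{\mathbb Q}$ then feeds exactly the hypotheses of Lemma~\ref{lemma.like_regressive} into the picture: it produces, for cofinally many $\nu$, branches $b_\nu$ with $g(b_\nu) < t\restriction\alpha_\nu \in b_\nu$ for the node $t$ coding $\dot d$, and the lemma forces these $b_\nu$ to stabilize on a stationary set, yielding a single branch $b$, i.e. a genuine guess $e \in M$ with $e \cap M = d$. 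That is $\lambda$-guessing. Internal unboundedness of $M$ falls out of the same genericity: the sequence $\langle \alpha_\nu\rangle$ is cofinal and its countable initial segments are in $M$ by $M$-genericity, so $M \cap P_{\omega_1}M$ is cofinal in $P_{\omega_1}M$.

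The main obstacle I anticipate is arranging the tree and the forcing $\dot{\mathbb Q}$ so that (i) $\dot{\mathbb Q}$ is genuinely ccc — this needs the levels of $T$ to be small enough and the conditions to be finite approximations amenable to a $\Delta$-system argument — and (ii) the name $\dot d$ ranges correctly, since $\dot d$ is a $\mathbb P$-name for an $M$-approximated set but $M$ itself only appears in the generic extension $V^{\mathbb P * \dot{\mathbb Q}}$ as the index of a list; the honest formulation is that the conclusion concerns all $M \in P'_{\omega_2}H_\theta$ in $V^{\mathbb P * \dot{\mathbb Q}}$ admitting an $M$-generic filter, so one must verify that for such an $M$, every $M$-approximated $d \subset \lambda$ with $d \in M$-or-not is captured — and here the approximation property is used to pull $d$ back to $V$, feed it to the tree construction done in $V$, and then the ccc tail $\dot{\mathbb Q}$ over $V^{\mathbb P}$ does the sealing. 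The parameter $w \in H_\theta$ that $M$ must contain encodes precisely the data of $T$, $g$, the enumeration $\langle e_\xi\rangle$, and the name $\dot{\mathbb Q}$, so that $M \prec H_\theta$ can reflect all the relevant facts.
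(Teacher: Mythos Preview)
Your proposal has the right ingredients but assembles them incorrectly, and in particular misidentifies what $\dot{\mathbb Q}$ does.

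First, the continuous cofinal map $c:\omega_1\to P_{\omega_1}\lambda$ is not produced by $\dot{\mathbb Q}$; it is already there in $V^{\mathbb P}$ because $\mathbb P$ collapses $2^\lambda$ to $\omega_1$, and the $\omega_1$-covering property lets you arrange $c(\alpha+1)\in V$ for all $\alpha$. Internal unboundedness comes straight from $c$, hence from $\mathbb P$-genericity, not from the tail forcing.

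Second, the tree is not built relative to a particular name $\dot d$. In $V^{\mathbb P}$ one sets $B={}^\lambda 2$ (which by the $\omega_1$-approximation property is exactly $({}^\lambda 2)^V$) and $T=\{h\restriction c(\alpha): h\in B,\ \alpha<\omega_1\}$. The cofinal branches of $T$ are precisely $B$, which now has size $\omega_1$, so Lemma~\ref{lemma.baumgartner} gives a Baumgartner function $g:B\to T$. One then splits $T=T^0\cup T^1$ with $T^0=\{t\in T:\exists b\in B\ g(b)<t\in b\}$; the point is that $T^1$ has no uncountable branches.

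Third, and this is the crucial correction, $\dot{\mathbb Q}$ is simply the standard ccc forcing that \emph{specializes} $T^1$. It does not add witnesses $b_\nu$ or a cofinal sequence; its sole job is to guarantee, via the specialization map, that $T^1$ has no uncountable branches in any further extension and hence in $V$. Now back in $V$, for any $M$ containing the relevant parameter $w$ and admitting an $M$-generic for $\mathbb P*\dot{\mathbb Q}$, the interpretations $c,T,T^0,T^1,g,f$ make sense relative to $M$, and one argues: any uncountable branch of $T$ must eventually live in $T^0$ (since $f$ specializes $T^1$), and then the pressing-down argument of Lemma~\ref{lemma.like_regressive} identifies it with a unique element of $B\restriction M$. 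Finally, if $d\subset\lambda$ is $M$-approximated, its characteristic function restricted to $M$ is a branch of $T$ (here one uses $c(\alpha+1)\in M$), hence equals $h\restriction M$ for some $h\in B\cap M$, so $d$ is $M$-guessed.

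Your attempt to make $\dot{\mathbb Q}$ depend on a name $\dot d$ cannot work: $M$ is not fixed when $\dot{\mathbb Q}$ is defined, and the whole point is a single ccc tail that handles all $M$ and all $M$-approximated $d$ simultaneously. The specialization of the branchless part is exactly the ``sealing'' you were looking for.
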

\begin{proof}
	Let $B \coloneqq \vphantom{2}^\lambda 2$.

	Work in $V^{\mathbb{P}}$.
	Let $\dot{c}: \omega_1 \to P_{\omega_1} \lambda$ be continuous and cofinal.
	As $\mathbb{P}$ satisfies the $\omega_1$-covering property, we may assume that $\dot{c}(\alpha + 1) \in V$ for all $\alpha < \omega_1$.
	Define
	\begin{equation*}
		\dot{T} \coloneqq \{ h \restriction \dot{c}(\alpha)\ |\ h \in B,\ \alpha < \omega_1 \}
	\end{equation*}
	As $\mathbb{P}$ satisfies the $\omega_1$-approximation property, we have that $B$ is the set of cofinal branches through $\dot{T}$.
	
	Since $|B|= \omega_1$, we can apply Lemma~\ref{lemma.baumgartner} and get a Baumgartner function $\dot{g}: B \to \dot{T}$.
	Let $\dot{l}: \omega_1 \to B$ be a bijection.
	Let
	\begin{align*}
		\dot{T}^0 & \coloneqq \{ t \in \dot{T}:\exists b \in B\ \dot{g}(b) < t \in b \},\\
		\dot{T}^1 & \coloneqq \dot{T} - \dot{T}^0.
	\end{align*}
	Note that $\dot{T}^1$ does not have cofinal branches.
       	Thus there is a ccc forcing $\dot{\mathbb{Q}}$ that specializes $\dot{T}^1$ with a specialization map $\dot{f}$.

	Now work in $V$.
	Let $w \in H_\theta$ contain all the relevant information, and let $M \in P'_{\omega_2} H_\theta$ be such that $w \in M$ and there is an $M$-generic $G_0 * G_1 \subset \mathbb{P} * \dot{\mathbb{Q}}$.

	By the usual density arguments, $c \coloneqq \dot{c}^{G_0}: \omega_1 \to P_{\omega_1} (M \cap \lambda)$ is continuous and cofinal and $c(\alpha + 1) \in M$ for all $\alpha < \omega_1$.
	Therefore $M$ is internally unbounded.
	We let $g \coloneqq \dot{g}^{G_0}$, $T \coloneqq \dot{T}^{G_0}$, $T^0 \coloneqq (\dot{T}^0)^{G_0}$, $T^1 \coloneqq (\dot{T}^1)^{G_0}$, $l \coloneqq \dot{l}^G$, and $f \coloneqq \dot{f}^{G_0 * G_1}$. 
	Define $B \restriction M \coloneqq \{ h \restriction M\ |\ h \in B \cap M\}$.
	Then we can use the facts that $G_0 * G_1$ is an $M$-generic filter and that $V^{\mathbb{P}} \models \rng \dot{l} = B$ to argue that
	\begin{itemize}
		\item $l: \omega_1 \to B \cap M$ is bijective,
		\item $T = \medcup \{h \restriction c(\alpha)\ |\ h \in B \cap M ,\ \alpha < \omega_1 \}$,
		\item $g: B \restriction M \to T$ is a Baumgartner function,\footnote{Here we naturally identify $\dom g = B \cap M$ with $B \restriction M$, which is a set of uncountable branches of $T$.}
		\item $T = T^0 \cup T^1$,
		\item $f: T^1 \to \omega$ is a specialization map.
	\end{itemize}
	
	\begin{claim}\label{claim.keyclaim1}
		$B \restriction M$ is the set of uncountable branches of $T$.
	\end{claim}
	\begin{claimproof}	
		It is clear that $B \restriction M$ is included in the set of uncountable branches of $T$.	
		For the other inclusion, observe that if $h$ is a branch through $T$, then $h$ must be a branch through $T_0$ since the specialization map $f$ witnesses that $T_1$ cannot have uncountable branches.
		This means that $h \restriction c(\alpha) \in T_0$ for eventually all $\alpha$.
		So for each such $\alpha$ there is a unique $b_\alpha \in B \restriction M$ such that $g(b_\alpha) \subset h \restriction c(\alpha) \subset b_\alpha$.
		Thus for eventually all $\alpha < \omega_1$ we have $\dom g(b_\alpha) = c(\beta_\alpha)$ for some $\beta_\alpha < \alpha$, and we may assume that there is a $\beta < \omega_1$ such that $\beta_\alpha = \beta$ for stationarily many $\alpha < \omega_1$.
		Hence if $\alpha$ is such that $\beta_\alpha = \beta$, then $h = b_\alpha \in B \restriction M$.
	\end{claimproof}

	\begin{claim}\label{claim.keyclaim2} 
		$t \in B \restriction M$ if and only if $t$ is the characteristic function of $d \cap M$ for some $M$-approximated $d \subset \lambda$.		
	  \end{claim}
	  \begin{claimproof}
	  	If $t \in B \restriction M$, then $t = h \restriction M$ for some $h \in B \cap M$, and $h$ is the characteristic function of some $d \in M \cap P\lambda$.

		 For the other direction pick an $M$-approximated $d \subset \lambda$, and let $t$ be the characteristic function of $d\cap M$.
		 We claim that $t$ is a branch through $T$ and thus in $B \restriction M$ by Claim~\ref{claim.keyclaim1}.
		 To see this observe that $c(\alpha + 1) \in M$ for all $\alpha < \omega_1$, so that $t \restriction c(\alpha + 1)$ is the characteristic function of $d \cap c(\alpha + 1)$, which is in $M$ since $d$ is $M$-approximated.
		 Thus $t \restriction c(\alpha + 1) \in T$.
	\end{claimproof}
	To see $M$ is $\lambda$-guessing, let $d \subset \lambda$ be $M$-approximated.
	Then by Claim~\ref{claim.keyclaim2} the characteristic function $t$ of $d \cap M$ is in $B \restriction M$.
	So there is $h \in B \cap M$ such that $t = h \restriction M$.
	Let $e \in M$ be such that $h$ is its characteristic function.
	Then $e \cap M = d \cap M$, and we are done.
\end{proof}

To apply Lemma~\ref{lemma.PFA->ISP}, we need an appropriate forcing.
The simplest and earliest example comes from~\cite{mitchell}.
We let $\mathbb{C}$ denote the forcing for adding a Cohen real.
See~\cite{krueger} for a proof of the following theorem.
\begin{theorem}\label{theorem.club_through_E}
	Let $\gamma \geq \omega_1$.
	Then the forcing $\mathbb{C} * \Coll(\omega_1, \gamma)$ is proper and satisfies the $\omega_1$-approximation property.
\end{theorem}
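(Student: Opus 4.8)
The plan is to dispatch properness routinely and to treat the $\omega_1$-approximation property as the heart of the matter, following the closure-point analysis of Hamkins~\cite{hamkins}, the bookkeeping being carried out in~\cite{krueger}.

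For properness: $\mathbb{C}$ is countable, hence ccc, hence proper; and, in any generic extension, $\Coll(\omega_1,\gamma)$ consists of countable partial functions and is therefore $<\omega_1$-closed, hence proper there. Since a two-step iteration of proper forcings is proper, $\mathbb{C}*\Coll(\omega_1,\gamma)$ is proper.

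For the $\omega_1$-approximation property I would argue by contradiction. Let $G = g * H$ be generic for $\mathbb{C}*\Coll(\omega_1,\gamma)$ and suppose $x \in V[G]$ with $x \subseteq \lambda$ is $V$-approximated but $x \notin V$; fix a name $\dot x$ and a condition forcing all of this. The idea is to build, working in $V$, a tree of conditions below that condition together with an $\subseteq$-increasing $\omega$-chain of countable sets $z_n \in V$ with union $z \in V$, arranged so that: (a) along each branch the $\mathbb{C}$-coordinates exhaust the binary branching structure of $\mathbb{C}$ itself, so that a $\mathbb{C}$-generic $g$ singles out a branch $b$ with the $n$-th $\mathbb{C}$-coordinate lying in $g$; (b) along that branch the $\Coll(\omega_1,\gamma)^{V[g]}$-coordinates form a descending $\omega$-chain, which — using $<\omega_1$-closure in $V[g]$ — has a lower bound $r_\infty$, below which we take $H$; (c) for each node, its two immediate successors force $\dot x \cap z_n$ to distinct ground-model values. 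Given such a tree, in $V[G]$ one has $x\cap z = \bigcup_n (x\cap z_n)$ with each $x\cap z_n$ determined by $b$ along the tree; since $z\in V$ and $x$ is $V$-approximated we get $x\cap z\in V$, and from $x\cap z$ together with the tree one recomputes $b$ inside $V$, contradicting that $b$ was generic for the nontrivial forcing $\mathbb{C}$.

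The main obstacle — and the step that carries the real weight — is making requirements (a) and (c) compatible: forcing the two successors of a node to decide $\dot x$ on $z_n$ to genuinely different elements of $V$ typically requires refining the $\mathbb{C}$-coordinate, whereas (a) pins the $\mathbb{C}$-coordinates to the rigid branching of $\mathbb{C}$, so the decision must be absorbed into the $\Coll$-coordinate and the ground-model values arranged so as to survive passage to the generic. It is precisely here that both features of the forcing are indispensable: the smallness of $\mathbb{C}$, which lets the tree be aligned with $\mathbb{C}$'s own branching so that the generic selects a full branch, and the countable closure of $\Coll(\omega_1,\gamma)$, which lets the $\Coll$-coordinates amalgamate along a branch. (Note that $\Coll(\omega_1,\gamma)$ on its own fails the $\omega_1$-approximation property — it adds a new $V$-approximated subset of $\omega_1$ — so no argument localized to a single factor can work.) This is exactly Hamkins' theorem that forcing with a closure point at $\omega$ has the $\omega_1$-approximation property; for the present statement it suffices to observe that $\mathbb{C}*\Coll(\omega_1,\gamma)$ fits that template, and to invoke~\cite{krueger} for the details.
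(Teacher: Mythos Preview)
Your proposal is correct and aligns with the paper's treatment: the paper gives no proof of its own but simply refers the reader to~\cite{krueger}, and your sketch is precisely the Hamkins closure-point argument underlying that reference, with properness handled in the standard way. If anything, you go further than the paper by outlining the tree construction and correctly identifying where the two hypotheses (smallness of $\mathbb{C}$, countable closure of $\Coll(\omega_1,\gamma)$) enter.
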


\begin{theorem}\label{theorem.PFA->ISP}
	\PFA\ implies $\ISP(\omega_2)$ holds.
\end{theorem}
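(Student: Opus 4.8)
The plan is to feed the Mitchell-style forcing of Theorem~\ref{theorem.club_through_E} into Lemma~\ref{lemma.PFA->ISP}, use Lemma~\ref{lemma.PFA_stationarily_often} to produce a genuine guessing model in $V$ under \PFA, and then read off $\ISP(\omega_2,\lambda)$ from Proposition~\ref{prop.guessing->ISP}. So fix $\lambda \geq \omega_2$; since $\lambda$ is arbitrary it suffices to prove $\ISP(\omega_2,\lambda)$. Fix $\theta$ large enough for all the results quoted below, with $\lambda^+ < \theta$, and put $\gamma \coloneqq 2^\lambda$ and $\mathbb{P} \coloneqq \mathbb{C} * \Coll(\omega_1,\gamma)$.

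First I would check that $\mathbb{P}$ satisfies the three hypotheses of Lemma~\ref{lemma.PFA->ISP}. Properness and the $\omega_1$-approximation property are given by Theorem~\ref{theorem.club_through_E}. For the $\omega_1$-covering property: $\Coll(\omega_1,\gamma)$ is $\sigma$-closed in $V^{\mathbb{C}}$, so every countable subset of $V$ lying in $V^{\mathbb{P}}$ already lies in $V^{\mathbb{C}}$, and $\mathbb{C}$ is ccc, so any such set is covered by a countable set in $V$; hence $(V,V^{\mathbb{P}})$ has the $\omega_1$-covering property. Finally, $\mathbb{C}$ being ccc preserves the cardinal $2^\lambda$, and then $\Coll(\omega_1,\gamma)$ collapses every ordinal in $[\omega_1,\gamma]$ — in particular $2^\lambda$ — to have cardinality $\aleph_1$; since $\omega_1$ is preserved, $({}^\lambda 2)^V$ has cardinality $\aleph_1$ in $V^{\mathbb{P}}$, i.e.\ $\mathbb{P}$ collapses $2^\lambda$ to $\omega_1$.

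Now apply Lemma~\ref{lemma.PFA->ISP}: it yields, in $V^{\mathbb{P}}$, a ccc forcing $\dot{\mathbb{Q}}$ and a parameter $w \in H_\theta$ such that every $M \in P'_{\omega_2} H_\theta$ with $w \in M$ carrying an $M$-generic filter for $\mathbb{P} * \dot{\mathbb{Q}}$ is $\lambda$-guessing. Since $\mathbb{P}$ is proper and $\dot{\mathbb{Q}}$ is ccc (hence proper), $\mathbb{P} * \dot{\mathbb{Q}}$ is proper, so Lemma~\ref{lemma.PFA_stationarily_often} applies to it: under \PFA\ the set of $M \in P_{\omega_2}H_\theta$ carrying an $M$-generic filter for $\mathbb{P} * \dot{\mathbb{Q}}$ is stationary in $P_{\omega_2}H_\theta$. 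The set $\{ M \in P'_{\omega_2}H_\theta\ |\ w \in M,\ \lambda^+ \in M \}$ is club, so there is an $M$ in both; this $M$ is $\lambda$-guessing and has $\lambda^+ \in M$, whence $\ISP(\omega_2,\lambda)$ holds by Proposition~\ref{prop.guessing->ISP}. As $\lambda \geq \omega_2$ was arbitrary, $\ISP(\omega_2)$ holds.

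Granting Lemmas~\ref{lemma.PFA->ISP} and~\ref{lemma.PFA_stationarily_often}, the theorem is essentially an assembly; the points demanding care are the verification that $\mathbb{C} * \Coll(\omega_1,\gamma)$ really satisfies all three hypotheses of Lemma~\ref{lemma.PFA->ISP} (the covering property being the least automatic of them) and the observation that composing with the ccc forcing $\dot{\mathbb{Q}}$ keeps the iteration proper so that \PFA\ may be invoked via Lemma~\ref{lemma.PFA_stationarily_often}. The genuine mathematical content — building the tree of ground-model branches, specializing its non-branching part, and running the Baumgartner-function argument — is already packaged inside Lemma~\ref{lemma.PFA->ISP}.
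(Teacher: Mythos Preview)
Your proof is correct and follows essentially the same route as the paper: take $\mathbb{P}=\mathbb{C}*\Coll(\omega_1,2^\lambda)$, invoke Theorem~\ref{theorem.club_through_E} for properness and $\omega_1$-approximation, feed this into Lemma~\ref{lemma.PFA->ISP}, apply Lemma~\ref{lemma.PFA_stationarily_often} to $\mathbb{P}*\dot{\mathbb{Q}}$, and finish with Proposition~\ref{prop.guessing->ISP}. You are simply more explicit than the paper about the $\omega_1$-covering property of $\mathbb{P}$, the properness of the composition with $\dot{\mathbb{Q}}$, and the requirement $\lambda^+\in M$, all of which the paper leaves implicit.
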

\begin{proof}
	Let $\theta$ be large enough, $\lambda \geq \omega_2$, and $\mathbb{P} \coloneqq \mathbb{C} * \Coll(\omega_1, 2^\lambda)$.
	Then $\mathbb{P}$ is proper and satisfies the $\omega_1$-approximation property by Theorem~\ref{theorem.club_through_E}.
	Thus by Lemmas~\ref{lemma.PFA_stationarily_often} and~\ref{lemma.PFA->ISP} the set $\mathcal{G}_{\omega_2}^\lambda H_\theta$ is stationary in $P_{\omega_2} H_\theta$.
	Therefore by Proposition~\ref{prop.guessing->ISP} we can conclude that $\ISP(\omega_2, \lambda)$ holds.
\end{proof}

Krueger \cite{krueger.IC,krueger.IA} has shown there is a great variety of forcings $\dot{\mathbb{P}}$ living in $V^{\mathbb{C}}$ such that $\mathbb{C}*\dot{\mathbb{P}}$ has the $\omega_1$-approximation and the $\omega_1$-covering properties.
These forcings can be used to show that under \PFA, there are stationarily many guessing models that are internally club.
As guessing models are not internally approachable, this gives another separation of the properties internally club and internally approachable.
Under \MM, one can use these forcings to show there are stationarily many guessing models that are internally unbounded but not internally stationary and also stationarily many that are internally stationary but not internally club, see also~\cite{VIA10}.

Strullu~\cite{STR10} has shown the principle $\ITP(\omega_2)$ follows from $\MRP + \MA$, where \MRP\ is the mapping reflection principle introduced by Moore~\cite{moore.MRP}.

It is furthermore worth noting that unlike $\ISP(\omega_2)$, the principle $\ITP(\omega_2)$ can already be proved by applying \PFA\ to a forcing of the form $\sigma$-closed $*$ ccc, see~\cite{diss}.

The next corollary is originally independently due to Foreman and Todor\v{c}evi\'c, see~\cite{koenig}.
\begin{corollary}\label{cor.PFA->nonAP}
	\PFA\ implies the approachability property fails for $\omega_1$, that is, $\omega_2 \notin I[\omega_2]$, where $I[\omega_2]$ denotes the approachability ideal on $\omega_2$.
\end{corollary}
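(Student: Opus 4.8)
The plan is to derive the failure of approachability at $\omega_1$ directly from $\ISP(\omega_2)$, which is available by Theorem~\ref{theorem.PFA->ISP}. Recall that $\omega_2 \in I[\omega_2]$ would give a stationary set $S \subset \omega_2 \cap \cof\omega_1$ together with a witnessing sequence $\langle a_\alpha \mid \alpha < \omega_2 \rangle$ of bounded subsets of $\omega_2$, each of order type less than $\omega_2$, such that for every $\alpha \in S$ there is a cofinal subset of $\alpha$ of order type $\omega_1$ all of whose proper initial segments appear among the $a_\beta$, $\beta < \alpha$. The idea is to encode such an approachability sequence as a thin (hence slender) $P_{\omega_2}\lambda$-list for a suitable $\lambda$, in such a way that an ineffable branch is impossible, contradicting $\ISP(\omega_2, \lambda)$.

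Concretely, I would work with $\lambda = \omega_2$ and use the approachability sequence $\langle a_\alpha \mid \alpha < \omega_2\rangle$ to build, on a club of $a \in P_{\omega_2}\omega_2$ with $\kappa_a := \omega_2 \cap a$ an ordinal of uncountable cofinality, a list $D = \langle d_a \mid a \in P_{\omega_2}\omega_2\rangle$ where $d_a$ records a guessed cofinal set in $\kappa_a$ of order type $\omega_1$ assembled from the $a_\beta$'s with $\beta < \kappa_a$; slenderness is automatic because each $d_a \cap b$ for countable $b$ is determined by countably much of the sequence and lies in the relevant elementary submodel. Then an ineffable branch $d \subset \omega_2$ would be a single set guessing $d_a = d \cap a$ on a stationary set of $a$, forcing $d \cap \kappa_a$ to be one of the "approachable" cofinal sets for stationarily many $\kappa_a \in S$; but a fixed $d$ of order type $\omega_2$ cannot have cofinal-in-$\kappa_a$, order-type-$\omega_1$ initial segments for a stationary (indeed any unbounded) collection of $\kappa_a$'s, because the order type of $d \cap \kappa_a$ grows unboundedly while $\omega_1$ is fixed. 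That contradiction yields $\omega_2 \notin I[\omega_2]$.

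An alternative and perhaps cleaner route — and the one I would actually pursue for safety — is to invoke the known equivalence/implication that $\ISP(\omega_2)$ (indeed already $\ITP(\omega_2)$) implies the failure of $\square(\omega_2,\omega_1)$-type square principles and of approachability; more directly, one can use the guessing-model formulation of Section~\ref{sect.guessing}. Namely, Theorem~\ref{theorem.PFA->ISP} together with Lemma~\ref{lemma.PFA->ISP} gives stationarily many $M \in P'_{\omega_2}H_\theta$ that are guessing and internally unbounded. A guessing, internally unbounded model $M$ with $M \cap \omega_2 \in \cof\omega_1$ cannot have its characteristic function at $\sup(M \cap \omega_2)$ approximated by an approachability sequence in $M$ without being guessed, and being guessed would identify a cofinal $\omega_1$-sequence in $\sup(M\cap\omega_2)$ with an element of $M$, which contradicts internal unboundedness combined with the cardinality constraint. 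Packaging this as: if $\omega_2 \in I[\omega_2]$, pick the witnessing sequence $\vec a$, put it into $w$, take a guessing internally unbounded $M \ni \vec a$ with $\delta := \sup(M \cap \omega_2) \in S$; the approachable cofinal $\omega_1$-set at $\delta$ has all initial segments in $M$, so it is $M$-approximated, hence $M$-guessed by some $e \in M$, but $e \cap M$ cofinal in $\delta$ of order type $\omega_1$ living in $M$ contradicts internal unboundedness of $M$ at $\omega_1$-cofinal points. This gives the corollary.

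The main obstacle is the bookkeeping in converting an approachability witness into a genuinely thin (or at least slender) $P_{\omega_2}\omega_2$-list and verifying the non-existence of an ineffable branch cleanly — in particular making sure the order-type obstruction ("$d \cap \kappa_a$ has order type $\omega_1$ for stationarily many $a$" is impossible) is stated for the right stationary set and survives the passage to the ineffable-branch stationary set. With the guessing-model reformulation this obstacle largely evaporates, since one only needs a single well-chosen model $M$ rather than a list, and the contradiction is just the incompatibility of $M$ being $\omega_1$-internally unbounded with $M$ containing a cofinal $\omega_1$-sequence through $\sup(M \cap \omega_2)$; so I would lead with that argument and relegate the list-theoretic version to a remark.
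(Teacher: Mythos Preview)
Your first route --- encode an approachability witness as a slender $P_{\omega_2}\omega_2$-list with no ineffable branch --- is precisely the paper's argument, which is stated as the single line $I[\omega_2]\subset I_\IS[\omega_2,\omega_2]$. Concretely: for $A\in I[\omega_2]$ witnessed by $\vec a$ and a club $C^*$, set $d_\alpha$ to be the approaching cofinal set at $\alpha$ for $\alpha\in A\cap C^*\cap\cof\omega_1$ and $d_\alpha=\emptyset$ otherwise; slenderness holds because every proper initial segment of $d_\alpha$ is some $a_\beta$ with $\beta<\alpha$, hence in any $M\ni\vec a$ with $M\cap\omega_2=\alpha$; and $A$-effability is exactly your order-type observation that $d\cap\alpha$ cannot be cofinal in $\alpha$ of order type $\omega_1$ for two distinct $\alpha$. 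So on this route you and the paper agree.

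Your second route via a single guessing model is a legitimate repackaging of the same idea, but the contradiction you name is wrong. Internal unboundedness only says that $M\cap P_{\omega_1}M$ is cofinal in $P_{\omega_1}M$; nothing in that prevents some $e\in M$ from satisfying $e\cap M$ cofinal in $\delta=M\cap\omega_2$. The actual contradiction is pure elementarity together with your order-type obstruction: if $e\in M$, $e\subset\omega_2$, and $e\cap\delta=C$ is cofinal in $\delta$ of order type $\omega_1$, then $e$ is unbounded in $\omega_2$ (otherwise $\sup e\in M\cap\omega_2=\delta$, impossible), so $e$ has an $\omega_1$-th element $\eta$; since $e,\omega_1\in M$ and $M\prec H_\theta$ we get $\eta\in M\cap\omega_2=\delta$, whence $e\cap\eta\subsetneq e\cap\delta=C$ yet $e\cap\eta$ already has order type $\omega_1$ --- contradiction. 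You may have been reaching for the fact (mentioned in the paper) that guessing models are not \emph{internally approachable}, which is indeed closely tied to the failure of approachability; but that is a different notion from internal unboundedness, and in any case the clean contradiction is the order-type one you already isolated in your first approach.
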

\begin{proof}
	It is not hard to see that $I[\omega_2] \subset I_\IS[\omega_2, \omega_2]$.
\end{proof}

The failure of various square principles under \PFA\ is originally due to Todor\-\v{c}evi\'c and Magidor, see~\cite{todorcevic.note_on_PFA} and~\cite[Theorem~6.3]{schimmerling}.
See~\cite{weiss} for the notation used in Corollary~\ref{cor.PFA->non_square}.
\begin{corollary}\label{cor.PFA->non_square}
	Suppose \PFA\ holds and $\cf \lambda \geq \omega_2$.
	Then $\non \square_{\cof(\omega_1)}(\omega_2, \lambda)$.
\end{corollary}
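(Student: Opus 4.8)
The plan is to deduce $\non\square_{\cof(\omega_1)}(\omega_2,\lambda)$ from $\ISP(\omega_2)$, which holds by Theorem~\ref{theorem.PFA->ISP}, by showing that a $\square_{\cof(\omega_1)}(\omega_2,\lambda)$-sequence gives rise to a thin (hence slender) $P_{\omega_2}\lambda$-list without an ineffable branch. Recall that a $\square_{\cof(\omega_1)}(\omega_2,\lambda)$-sequence is a coherent sequence $\langle C_a \mid a\in P_{\omega_2}\lambda\rangle$ (in the generalized sense of~\cite{weiss}) such that each $C_a$ is a club-like subset of $a$, the sequence is coherent on a club, and the relevant chains of order type cofinal in points of cofinality $\omega_1$ cannot be threaded. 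First I would fix such a sequence and, for each $a\in P_{\omega_2}\lambda$, set $d_a\coloneqq C_a$. Coherence of the $\square$-sequence immediately yields that $D\coloneqq\langle d_a\mid a\in P_{\omega_2}\lambda\rangle$ is thin: on the coherence club, $d_a\cap c$ depends only on $c$ up to boundedly many possibilities, so $|\{d_a\cap c\mid c\subset a\}|<\omega_2$. By the remark after Definition~\ref{def.P_kappa_lambda.thin}, thin lists are slender, so $\ISP(\omega_2,\lambda)$ applies and produces an ineffable branch $d\subset\lambda$, i.e.\ a stationary $S\subset P_{\omega_2}\lambda$ with $d\cap a=d_a$ for all $a\in S$.

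The next step is to extract a thread from this ineffable branch and derive a contradiction with the non-threadability clause of $\square_{\cof(\omega_1)}$. Since $S$ is stationary and the branch $d$ is coherent with the $\square$-sequence on a club, $d$ restricted to a suitable cofinal chain in $\lambda$ (or, after the standard reduction via a bijection, in any relevant ordinal of cofinality $\omega_1$ sitting inside $\sup$ of the relevant $a$'s) is a club subset of its supremum that coheres with all the $C_a$; this is precisely a thread through the $\square_{\cof(\omega_1)}(\omega_2,\lambda)$-sequence at a point of cofinality $\omega_1$, which the $\square$-sequence was assumed to forbid. Concretely, one uses the stationarity of $S$ together with the hypothesis $\cf\lambda\geq\omega_2$ to find a point $\xi$ of cofinality $\omega_1$ such that $\{a\cap\xi\mid a\in S,\ \xi\in a\}$ is cofinal in $\xi$, and then $d\cap\xi$ is a thread, contradicting non-threadability. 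I would present this using the notation and coherence conventions of~\cite{weiss} rather than reproving them.

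The main obstacle I anticipate is bookkeeping: matching the precise formulation of $\square_{\cof(\omega_1)}(\omega_2,\lambda)$ from~\cite{weiss} (which indexes the coherent sequence by $P_{\omega_2}\lambda$ rather than by ordinals) with the $P_{\omega_2}\lambda$-list framework, and in particular verifying that "no thread at points of cofinality $\omega_1$" is exactly the negation of "has an ineffable branch" for the list $\langle C_a\rangle$. One has to be careful that an ineffable branch, which agrees with $d_a$ on a stationary rather than club set, still suffices to build an honest thread; this uses that the non-threadability is robust, i.e.\ rules out threads agreeing on a cofinal (equivalently stationary, given cofinality $\omega_1$) set. A secondary technical point is the reduction from general $\lambda$ with $\cf\lambda\geq\omega_2$ to the situation where $\ISP(\omega_2,\lambda)$ can be invoked cleanly; but since $\ISP(\omega_2)$ asserts $\ISP(\omega_2,\lambda)$ for all $\lambda\geq\omega_2$, this is immediate and the cofinality hypothesis is needed only on the $\square$ side to guarantee the existence of relevant points of cofinality $\omega_1$ that are approximated by members of a stationary subset of $P_{\omega_2}\lambda$. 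Once these alignments are in place, the contradiction is essentially formal, so I would keep the write-up short, citing~\cite{weiss} for the equivalence between the thinness of canonical square lists and the combinatorial content of $\square_{\cof(\omega_1)}$.
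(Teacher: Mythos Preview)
Your approach is essentially the same as the paper's: the paper's proof is simply the one-line citation ``This follows from Theorem~\ref{theorem.PFA->ISP} and~\cite[Theorem~4.2]{weiss},'' and your plan likewise combines $\PFA \Rightarrow \ISP(\omega_2)$ with the result from~\cite{weiss} that a $\square_{\cof(\omega_1)}(\omega_2,\lambda)$-sequence yields a thin list without an ineffable branch. Your additional discussion is an informal sketch of what \cite[Theorem~4.2]{weiss} proves rather than a genuinely different argument, and since you explicitly say you would cite~\cite{weiss} for the details, the two proofs coincide.
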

\begin{proof}
	This follow from Theorem~\ref{theorem.PFA->ISP} and~\cite[Theorem~4.2]{weiss}.
\end{proof}

\section{An interlude on forcing}

\begin{definition}
	Let $\mathbb{P}$ be a forcing.
	We say $\mathbb{P}$ is a \emph{standard iteration of length $\kappa$} if
	\begin{enumerate}[label=(\roman*)]
		\item $\mathbb{P}$ is the direct limit of an iteration $\langle \mathbb{P}_\alpha\ |\ \alpha < \kappa \rangle$ that takes direct limits stationarily often,
		\item $\mathbb{P}_\alpha$ has size less than $\kappa$ for all $\alpha<\kappa$.
	\end{enumerate}
\end{definition}

It is a classical result that the $\mu$-cc is preserved by iterations of length $\mu$ of posets of size less than $\mu$ that take direct limits stationarily often.
So the following lemma does not come as a surprise but nonetheless has not been observed so far.
\begin{lemma}\label{keylem1}
	Let $\mathbb{P}$ be a standard iteration of length $\kappa$.
	Then $\mathbb{P}$ is $\kappa$-cc and satisfies the $\kappa$-approximation property.
\end{lemma}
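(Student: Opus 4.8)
The plan is to prove the two assertions separately, since they are independent in flavor: the $\kappa$-cc is the classical preservation theorem, and the $\kappa$-approximation property is where the real work lies. For the $\kappa$-cc, I would recall the standard $\Delta$-system/reflection argument: given an antichain $A \subset \mathbb{P}$ of size $\kappa$, each condition has support bounded below $\kappa$ (because $\mathbb{P}_\alpha$ has size $<\kappa$ for each $\alpha$, so conditions are essentially supported in some $\mathbb{P}_\alpha$), and since direct limits are taken stationarily often, the set of $\alpha<\kappa$ such that $\mathbb{P}_\alpha$ absorbs $\kappa$-many of the conditions with support $\subset\alpha$ contains a stationary (indeed club-intersected-with-the-direct-limit-stage) set; pick such an $\alpha$ where $\kappa$ of the conditions land in $\mathbb{P}_\alpha$, and since $|\mathbb{P}_\alpha|<\kappa$ two of them must be equal — or more carefully, form a $\Delta$-system — contradicting antichain-ness. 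I would just cite this as classical and give a one-paragraph reminder.

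For the $\kappa$-approximation property, let $G \subset \mathbb{P}$ be $V$-generic and suppose $x \in V[G]$, $x \subset \lambda$ for some ordinal $\lambda$ (working with ordinals loses no generality), with $x \cap z \in V$ for every $z \in P_\kappa^V V$. Fix a name $\dot x$ and a condition $p$ forcing all of the above. The key point is that by the $\kappa$-cc, every initial segment $\dot x \cap \gamma$ (for $\gamma<\lambda$) is decided by an antichain of size $<\kappa$, hence $p$ forces $\dot x \cap \gamma$ to lie in a set $A_\gamma \in V$ of size $<\kappa$; moreover, again by $\kappa$-cc and the fact that $\mathbb{P}$ is a direct limit of the $\mathbb{P}_\alpha$, the name for $\dot x \cap \gamma$ can be taken inside some $\mathbb{P}_{\alpha(\gamma)}$ with $\alpha(\gamma)<\kappa$. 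Now I would run a reflection argument: build an elementary submodel $M \prec H_\Theta$ (in $V$) of size $<\kappa$ containing $\dot x$, $p$, $\mathbb{P}$, the iteration, with $M \cap \kappa =: \delta$ an ordinal, and — crucially — chosen so that $\delta$ is one of the stationarily-many direct-limit stages, so that $\mathbb{P}_\delta = \mathbb{P} \cap M$ (up to the obvious identification) is a complete suborder. Then $G \cap \mathbb{P}_\delta$ is $V$-generic for $\mathbb{P}_\delta$, and I claim $x$ is computed inside $V[G \cap \mathbb{P}_\delta]$ from the guesses $x \cap \gamma$: for each $\gamma < \lambda$, since $A_\gamma \in M$ has size $<\kappa$ we get $A_\gamma \subset M$, so $x \cap \gamma \in M \cap V = $ something already "seen" by $\mathbb{P}_\delta$; the point is that the name for $\dot x \cap \gamma$ sits in $\mathbb{P}_{\alpha(\gamma)}$ and — after reflecting — in $\mathbb{P}_\delta$, so $x \cap \gamma \in V[G\cap\mathbb{P}_\delta]$ for all $\gamma$, whence $x = \bigcup_{\gamma<\lambda}(x\cap\gamma) \in V[G\cap\mathbb{P}_\delta]$.

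The finish is then a factoring argument: $V[G] = V[G\cap\mathbb{P}_\delta][H]$ where $H$ is generic for the quotient $\mathbb{P}/(G\cap\mathbb{P}_\delta)$, and I want to conclude $x \in V$. Here is where the covering-by-small-sets hypothesis on $x$ re-enters: in $V[G \cap \mathbb{P}_\delta]$ the set $x$ is a subset of $\lambda$ with $|x\cap z|$-type information, but actually the cleanest route is to apply the $\kappa$-cc of the quotient and a counting argument to show that $x$, being approximated by $V$-sets and lying in a $\kappa$-cc extension $V[G\cap\mathbb{P}_\delta]$ of a model where it is "small-piece-coded", must have a name of size $<\kappa$, hence — using that $M$ caught everything and $\delta = M\cap\kappa$ — must already belong to $V$. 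I would organize this by showing directly that the family $\{x \cap \gamma : \gamma<\lambda\}$ generates, in $V$, a single function $\gamma \mapsto (x\cap\gamma)$ with range of size $<\kappa$... no: rather, $x \cap \delta \in V$ by the approximation hypothesis applied to a $z\in P_\kappa^V V$ with $\delta\subset z$, and then one bootstraps along $\lambda$. \textbf{The main obstacle} I anticipate is exactly this last factoring/counting step — making precise why approximation by ground-model sets plus $\kappa$-cc of the tail forces $x$ into $V$, rather than merely into the intermediate model $V[G\cap\mathbb{P}_\delta]$; the elementary-submodel reflection handles the "$x$ lives at a small stage" part cleanly, but pushing from "small stage" down to "the ground model itself" requires carefully exploiting that $\mathbb{P}_\delta$ again has size $<\kappa$ (so is itself $\kappa$-cc with small conditions) together with the hypothesis, and I expect the argument to hinge on choosing $M$ (equivalently $\delta$) inside the stationary set of direct-limit stages so that no new information about $x$ can appear strictly between stage $\delta$ and stage $\kappa$.
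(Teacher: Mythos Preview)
Your proposal has a genuine gap, and it is not the one you flagged as the ``main obstacle.'' The reflection step itself breaks down before you ever reach the factoring problem.

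You build $M \prec H_\Theta$ of size $<\kappa$ with $M \cap \kappa = \delta$ a direct-limit stage, and then assert that for every $\gamma < \lambda$ the name for $\dot{x} \cap \gamma$ reflects into $\mathbb{P}_\delta$, whence $x = \bigcup_{\gamma<\lambda}(x \cap \gamma) \in V[G \cap \mathbb{P}_\delta]$. But $|M| < \kappa \leq \lambda$, so $M$ contains only $<\kappa$ many ordinals $\gamma < \lambda$; you cannot invoke ``$A_\gamma \in M$'' for all $\gamma$, only for $\gamma \in M$. Your argument therefore yields at best $x \cap \sup(M \cap \lambda) \in V[G \cap \mathbb{P}_\delta]$, not $x$ itself. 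Relatedly, your preliminary claim that the name for $\dot{x} \cap \gamma$ lives in some $\mathbb{P}_{\alpha(\gamma)}$ is only justified when you already know $x \cap \gamma \in V$ (so a single small antichain decides it); the approximation hypothesis gives this for $\gamma < \kappa$, but for $\gamma \geq \kappa$ it is essentially what you are trying to prove.

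The paper avoids this by induction on $\lambda$. In the base case $\lambda = \kappa$, rather than using one elementary submodel it runs a pressing-down argument over the stationary set of direct-limit stages: for each such $\alpha$ one extracts a $\mathbb{P}_\alpha$-name $\dot{h}_\alpha$ for $\dot{h} \restriction \alpha$, shows $\bar{p} \forces_{\mathbb{P}_\alpha} \dot{h}_\alpha \in V$, picks a condition $p_{\xi(\alpha)} \in \mathbb{P}_\alpha$ deciding it, and then applies Fodor to fix $\xi$ on a stationary set --- so a \emph{single} condition $p_\xi$ decides $\dot{h} \restriction \alpha$ for cofinally many $\alpha$, hence decides $\dot{h}$. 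The inductive step for $\lambda > \kappa$ splits on $\cf\lambda$: if $\cf\lambda > \kappa$, pigeonhole on the deciding conditions; if $\cf\lambda \leq \kappa$, build the tree of possible values of $\dot{h} \restriction \gamma$ (levels of size $<\kappa$ by $\kappa$-cc), find a set $X$ of size $\leq \kappa$ separating incompatible nodes, and use the induction hypothesis on $\dot{h} \restriction X$. Your single-submodel picture does not substitute for either the Fodor step in the base case or the induction handling arbitrary $\lambda$.
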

\begin{proof}
	Let $\mathbb{P}$ be the direct limit of $\langle \mathbb{P}_\alpha\ |\ \alpha < \kappa \rangle$.
	It suffices to verify the $\kappa$-approximation property for subsets of ordinals.
	The proof is by induction on $\lambda\geq\kappa$.

	We start with the proof of the base case $\lambda = \kappa$.
	We need to show that if $p \in \mathbb{P}$ and $\dot{h} \in V^{\mathbb{P}}$ are such that $p \forces_{\mathbb{P}} \dot{h} \in \vphantom{2}^\kappa 2$ and $p \forces_{\mathbb{P}} \forall \alpha < \kappa\ \dot{h} \restriction \alpha \in V$, then $p \forces_{\mathbb{P}} \dot{h} \in V$.
	So assume to the contrary there is $\bar{p} \leq p$ such that $\bar{p} \forces_{\mathbb{P}} \dot{h} \notin V$.

	Let $P=\{p_\xi\ |\ \xi<\kappa\}$ and let $C_0$ be the club of all $\alpha < \kappa$ such that $\medcup \{ \mathbb{P}_\xi\ |\ \xi < \alpha \} = \{p_\xi\ |\ \xi < \alpha \}$.
	Define $S \coloneqq \{ \alpha < \kappa\ |\ \mathbb{P}_\alpha\ \text{is direct limit} \}$.
	$S$ is stationary by assumption, and if $\alpha \in S \cap C_0$, then $\mathbb{P}_\alpha = \{ p_\xi\ |\ \xi < \alpha \}$.

	For $\xi < \kappa$ let $A_\xi \subset \mathbb{P}$ be a maximal antichain below $\bar{p}$ that decides the value of $\dot{h}(\xi)$.
	Then $C \coloneqq \{ \alpha \in C_0\ |\ \forall \xi < \alpha\ A_\xi \subset \mathbb{P}_\alpha \}$ is club.
	For $\alpha \in C$ let
	\begin{equation*}
		\dot{h}_\alpha \coloneqq \{ \langle (\xi,i) , p \rangle\ |\ \xi < \alpha,\ p \in \mathbb{P}_\alpha,\ p \forces_{\mathbb{P}} \dot{h}(\xi)=i \}.
	\end{equation*}
	Then $\dot{h}_\alpha \in V^{\mathbb{P}_\alpha}$ and $\bar{p} \forces_{\mathbb{P}} \dot{h}_\alpha \in \vphantom{2}^\alpha 2$.

	\begin{claim}\label{keyfa0}
		$\bar{p} \forces_{\mathbb{P}} \dot{h} \restriction \alpha = \dot{h}_\alpha$ for all $\alpha \in C$.
	\end{claim}
	\begin{claimproof}
		Suppose to the contrary that for some $\alpha \in C$ there are $q \leq \bar{p}$ and $\xi < \alpha$ such that $q \forces_{\mathbb{P}} \dot{h}(\xi) \neq \dot{h}_\alpha(\xi)$.
		Let $r \in A_\xi$ be compatible with $q$.
		Then $r \forces_{\mathbb{P}} \dot{h}(\xi) = i$ for some $i < 2$.
		But as $A_\xi \subset \mathbb{P}_\alpha$, this also means $r \forces_{\mathbb{P}} \dot{h}_\alpha(\xi) = i$, contradicting its compatibility with $q$.
	\end{claimproof}

	\begin{claim}\label{keyfa1}
		$\bar{p} \forces_{\mathbb{P}_\alpha} \dot{h}_\alpha \in V$ for all $\alpha \in C$.
	\end{claim}
	\begin{claimproof}
		Assume towards a contradiction that some for some $q \leq \bar{p}$ and $\alpha \in C$ we have $ q \forces_{\mathbb{P}_\alpha} \dot{h}_\alpha \notin V$.
		Then for each $g\in \vphantom{2}^\alpha 2$ there is a maximal antichain
		$A_g$ among the conditions in $\mathbb{P}_\alpha$ below $q$ such that for any element $r \in A_g$, there is $\xi_r < \alpha$ such that $r \forces_{\mathbb{P}_\alpha} \dot{h}_\alpha(\xi_r) \neq g(\xi_r)$.
		This means that any $\langle(\xi_r,i),p\rangle\in\dot{h}_\alpha$ such that $p$ is compatible with $r$ is such that $g(\xi_r)\neq i$. 
		This in turn means that $r\forces_{\mathbb{P}} \dot{h}_\alpha(\xi_r) \neq g(\xi_r)$ for any $r\in A_g$ and for any $g\in \vphantom{2}^\alpha 2$. 

		Since a maximal antichain in $\mathbb{P}_\alpha$ is also a maximal antichain in $\mathbb{P}$, this implies that $q \forces_{\mathbb{P}} \dot{h}_\alpha \notin V$,
		which is impossible by Claim~\ref{keyfa0}.
	\end{claimproof}

	For $\alpha \in S \cap C_0$ by Claim~\ref{keyfa1} $\bar{p} \forces_{\mathbb{P}_\alpha} \dot{h}_\alpha \in V$, so there are $p_\xi \in \mathbb{P}_\alpha$, $p_\xi \leq \bar{p}$, and $g_\alpha \in \vphantom{2}^\alpha 2$ such that $p_\xi \forces_{\mathbb{P}_\alpha} \dot{h}_\alpha = g_\alpha$.
	Since $\alpha \in S \cap C_0$, we have $\xi < \alpha$, so for some stationary $S_0 \subset S \cap C_0$ we may assume $\xi$ is fixed.
	But then $p_\xi \forces_{\mathbb{P}_\alpha} \dot{h} \restriction \alpha = \dot{h}_\alpha = g_\alpha$ for all $\alpha \in S_0$,
	so that $p_\xi \forces_{\mathbb{P}} \dot{h} = \medcup_{\alpha \in S_0} \dot{h}_\alpha = \medcup_{\alpha \in S_0} g_\alpha \in V$, contradicting $p_\xi \leq \bar{p}$.

	Now we prove the lemma for $\lambda > \kappa$, assuming it has been shown for all $\gamma < \lambda$.
	Let $p \in \mathbb{P}$ and $\dot{h} \in V^{\mathbb{P}}$ be such that $p \forces_{\mathbb{P}} \dot{h} \in \vphantom{2}^\lambda 2$ and
	$p \forces_{\mathbb{P}} \forall z \in P^V_\kappa V\ \dot{h} \restriction z \in V$.

	First suppose $\cf \lambda > \kappa$.
	By the induction hypothesis we know that $p \forces_{\mathbb{P}} \forall \gamma < \lambda\ \dot{h} \restriction \gamma \in V$.
	For every $\gamma < \lambda$ there is $\alpha_\gamma < \kappa$ and $g_\gamma \in \vphantom{2}^\gamma 2$ such that $p_{\alpha_\gamma} < p$ and $p_{\alpha_\gamma} \forces \dot{h} \restriction \gamma = g_\gamma$.
	Thus there is an unbounded $U \subset \lambda$ such that $\alpha_\gamma = \alpha_{\gamma'}$ for all $\gamma, \gamma' \in U$,
	so that for $\gamma \in U$ we have $p_{\alpha_\gamma} \forces \dot{h} = \medcup_{\gamma \in U} g_\gamma \in V$.

	If $\cf \lambda \leq \kappa$, let $U \subset \lambda$ be cofinal of order type $\cf \lambda$, and set
	\begin{equation*}
		T \coloneqq \{ g \in \vphantom{2}^{<\lambda} 2\ |\  \exists q \leq p\ \exists \gamma \in U\ q \forces_{\mathbb{P}} \dot{h} \restriction \gamma = g \}.
	\end{equation*}
	Then $T$, ordered by end extension, is a tree of height $\cf \lambda$.
	As $\mathbb{P}$ is $\kappa$-cc, all levels of $T$ have size less than $\kappa$.
	Let $X$ be a set of size at most $\kappa$ such that for every pair of incompatible elements $g, g' \in T$ there is $\alpha \in X$ such that $g(\alpha) \neq g'(\alpha)$.
	By the induction hypothesis we have $p \forces_{\mathbb{P}} \dot{h} \restriction X \in V$.
	But $p \forces_{\mathbb{P}} \dot{h} = \medcup \{ g \in T\ |\ g \restriction X = \dot{h} \restriction X \}$,
	so that $p \forces_{\mathbb{P}} \dot{h} \in V$.
\end{proof}

\section{The principles {\sffamily TP} and {\sffamily ITP} in generic extensions}

\begin{lemma}\label{lemma.thin_from_groundmodel}
	Let $V \subset W$ be a pair of models of\/ \ZFC\ that satisfies the $\kappa$-covering property, and suppose $\kappa$ is inaccessible in $V$.
	Suppose $D = \langle d_a\ |\ a \in P^W_\kappa \lambda \rangle$ is a $P^W_\kappa \lambda$-list such that for every $a \in P^W_\kappa \lambda$ there is $z_a \in V$ such that $d_a = z_a \cap a$.
	Then $D$ is thin.
\end{lemma}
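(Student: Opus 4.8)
The plan is to produce, inside $V$, a club $C \subset P^W_\kappa \lambda$ witnessing thinness, where we must bound $|\{d_a \cap c \mid c \subset a \in P^W_\kappa \lambda\}|$ for every $c \in C$. The natural candidate for $C$ is the set of those $c \in P^W_\kappa \lambda$ that are (images of) elements of $P^V_\kappa \lambda$; more precisely, fix in $V$ a bijection-type coding and take $C$ to be the set of $c \in P^W_\kappa \lambda$ such that $c \in V$ and $c$ is closed under enough Skolem-type functions so that $\langle c, \in\rangle \prec \langle \lambda, \in, \dots\rangle$ in $V$. By the $\kappa$-covering property, $P^V_\kappa \lambda$ is cofinal in $P^W_\kappa \lambda$, and since any club of $P^W_\kappa\lambda$ that lies in $V$ remains club in $W$ once it is cofinal, this $C$ is club in $P^W_\kappa \lambda$. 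The key point is that $C$ can be taken to consist of sets lying in $V$, so that for $c \in C$ the ground model can see all the relevant data.

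First I would fix $c \in C$. For each $a \supseteq c$ in $P^W_\kappa \lambda$ we have $d_a = z_a \cap a$ for some $z_a \in V$, hence $d_a \cap c = z_a \cap c$. Thus
\begin{equation*}
	\{ d_a \cap c \mid c \subset a \in P^W_\kappa \lambda \} \subseteq \{ z \cap c \mid z \in V,\ z \subseteq \lambda \}.
\end{equation*}
Now the right-hand side is computed entirely in $V$: it equals $\{z \cap c \mid z \in P(\lambda)^V\}$, and since $c \in V$, this set has size at most $|P(c)^V| = 2^{|c|_V}$ in $V$. As $\kappa$ is inaccessible in $V$ and $|c|_V < \kappa$, we get $2^{|c|_V} < \kappa$, so the set has size $< \kappa$ in $V$. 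Finally, a surjection in $V$ from an ordinal $<\kappa$ onto this set is still a surjection in $W$, so the set has size $<\kappa$ in $W$ as well. This gives the bound required by the definition of thin.

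The main obstacle is a bookkeeping subtlety rather than a deep difficulty: one must be careful that the witnessing club $C$ genuinely lies in $V$ and is genuinely club in $P^W_\kappa \lambda$ simultaneously. The $\kappa$-covering property gives cofinality of $P^V_\kappa\lambda$ in $P^W_\kappa\lambda$, which is exactly what is needed to promote a club-in-$V$ subset of $P^V_\kappa\lambda$ to a club-in-$W$ subset of $P^W_\kappa\lambda$ (closure under unions of chains of length $<\kappa$ is absolute once the relevant small sets are covered); I would phrase $C$ as $\Cl_f \cap V$ for a suitable $f \in V$, or equivalently restrict attention to $c$ of the form $\kappa \cap c \in \Ord$, $\langle c, \in\rangle \prec \langle H_\theta^V, \in\rangle$. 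The remaining cardinal-arithmetic computation is exactly the one above and is routine given inaccessibility of $\kappa$ in $V$.
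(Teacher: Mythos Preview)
Your core computation is right: for $c \in P^V_\kappa\lambda$ you correctly bound
\[
\{d_a \cap c \mid c \subset a \in P^W_\kappa\lambda\} \subset \{z \cap c \mid z \in P^V\lambda\},
\]
and the right-hand side has size at most $|P^V c| < \kappa$ by inaccessibility in $V$.

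The gap is in your handling of the club. Your claim that ``closure under unions of chains of length $<\kappa$ is absolute once the relevant small sets are covered'' is not correct. A $\subset$-chain $\langle c_n \mid n<\omega\rangle$ of elements of $P^V_\kappa\lambda$ lives in $W$, and its union need not lie in $V$: for instance, if $W$ contains a new real $r\subset\omega$, finite sets coding longer and longer initial segments of $r$ form such a chain whose union is not in $V$. Thus $P^V_\kappa\lambda$ is cofinal in $P^W_\kappa\lambda$ but typically not closed there, and neither $\Cl_f\cap V$ nor the set of $c$ with $\langle c,\in\rangle\prec\langle H_\theta^V,\in\rangle$ need be closed in $W$. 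So as written you do not have a club witness.

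The fix is simpler than finding a special club: prove the bound for \emph{every} $c\in P^W_\kappa\lambda$, so that $C=P^W_\kappa\lambda$ itself works. This is what the paper does. Given arbitrary $c$, use $\kappa$-covering to pick $\bar c\in P^V_\kappa\lambda$ with $c\subset\bar c$; then for any $a\supseteq c$ one has $d_a\cap c = z_a\cap c = (z_a\cap\bar c)\cap c$, and $z_a\cap\bar c\in P^V\bar c$. Hence
\[
\{d_a\cap c \mid c\subset a\}\subset\{w\cap c \mid w\in P^V\bar c\},
\]
which has size at most $|P^V\bar c|<\kappa$. This is exactly your cardinality estimate, applied to $\bar c$ rather than $c$; the covering step is what lets you avoid the (false) club claim altogether.
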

\begin{proof}
	Work in $W$.
	Let $c \in P_\kappa \lambda$.
	By the $\kappa$-covering property there is $\bar{c} \in P^V_\kappa \lambda$ such that $c \subset \bar{c}$.
	Also we have $\{ d_a \cap c\ |\ c \subset a \in P^W_\kappa \lambda \} = \{ z_a \cap \bar{c} \cap c\ |\ c \subset a \in P^V_\kappa \lambda \} \subset \{ z \cap c\ |\ z \in P^V \bar{c} \}$.
	But the latter set has cardinality less than than $\kappa$ since $\kappa$ is inaccessible in $V$.
\end{proof}

\begin{proposition}\label{prop.old_I_subset_of_new_I}
	Let $V \subset W$ be a pair of models of\/ \ZFC\ that satisfies the $\kappa$-covering and the $\kappa$-approximation properties, and suppose $\kappa$ is inaccessible in $V$.
	Then
	\begin{equation*}
		I_\IT^V[\kappa, \lambda] \subset I^W_\IT[\kappa, \lambda].
	\end{equation*}
\end{proposition}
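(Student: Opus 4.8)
The plan is to take, in $V$, a thin $A$-effable $P^V_\kappa\lambda$-list $D=\langle d_a\ |\ a\in P^V_\kappa\lambda\rangle$ witnessing $A\in I^V_\IT[\kappa,\lambda]$, extend it in $W$ to the $P^W_\kappa\lambda$-list $D'$ given by $d'_a\coloneqq d_a$ for $a\in P^V_\kappa\lambda$ and $d'_a\coloneqq\emptyset$ otherwise, and check that $D'$ witnesses $A\in I^W_\IT[\kappa,\lambda]$. That $D'$ is thin in $W$ is immediate from Lemma~\ref{lemma.thin_from_groundmodel}: its hypotheses hold (the $\kappa$-covering property and the inaccessibility of $\kappa$ in $V$), and taking $z_a\coloneqq d_a$ for $a\in P^V_\kappa\lambda$ and $z_a\coloneqq\emptyset$ otherwise exhibits $d'_a=z_a\cap a$ with $z_a\in V$ in both cases.

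The substance is to show $D'$ is $A$-effable in $W$. Suppose not; then in $W$ there is a set $S\subseteq A$ stationary in $P^W_\kappa\lambda$ with $d'_a=d'_b\cap a$ whenever $a\subseteq b$ lie in $S$. Here the hypothesis $A\subseteq P^V_\kappa\lambda$ enters: every member of $S$ lies in $V$, so $d'_a=d_a$ for $a\in S$ and the displayed coherence says $D\restriction S$ is coherent. I would then check that $S$ meets every club $C\subseteq P^V_\kappa\lambda$ lying in $V$: choose $f:P_\omega\lambda\to P^V_\kappa\lambda$ in $V$ with $\Cl_f\subseteq C$; recomputed in $W$ this gives a club $\Cl_f^W$ of $P^W_\kappa\lambda$ (here $\kappa$ remains regular in $W$, which follows from the $\kappa$-covering property) with $\Cl_f^W\cap P^V_\kappa\lambda=\Cl_f^V\subseteq C$, so any point of the nonempty set $S\cap\Cl_f^W$ lies in $S\cap C$. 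In particular $S$ is cofinal in $P^V_\kappa\lambda$.

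Finally I would pull a branch back into $V$ using the $\kappa$-approximation property. Set $d\coloneqq\bigcup_{a\in S}d_a\subseteq\lambda$; coherence and cofinality of $S$ give $d\cap a=d_a$ for all $a\in S$, so for $z\in P^V_\kappa\lambda$, picking $a\in S$ with $z\subseteq a$ yields $d\cap z=d_a\cap z\in V$, whence $d\in V$ by the $\kappa$-approximation property. Working now in $V$, the set $T\coloneqq\{\,c\in A\ |\ d_c=d\cap c\,\}$ lies in $V$, contains $S$, hence meets every club of $P^V_\kappa\lambda$ in $V$ and so is stationary; moreover $D\restriction T$ is coherent, as $d_a=d\cap a=(d\cap b)\cap a=d_b\cap a$ for $a\subseteq b$ in $T$. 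This contradicts the $A$-effability of $D$ in $V$. Therefore $D'$ is $A$-effable, hence a thin $A$-effable $P^W_\kappa\lambda$-list, and $A\in I^W_\IT[\kappa,\lambda]$. I expect the main obstacle to be exactly this last maneuver: the coherent stationary set $S$ obtained in $W$ need not belong to $V$, and it is the $\kappa$-approximation property that converts it into a coherent stationary subset of $A$ actually lying in $V$, to which the assumed $A$-effability of $D$ can be applied.
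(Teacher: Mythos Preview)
Your proof is correct and follows essentially the same approach as the paper: extend the $V$-list trivially to $P^W_\kappa\lambda$, invoke Lemma~\ref{lemma.thin_from_groundmodel} for thinness, and from a hypothetical coherent stationary $S\subseteq A$ in $W$ use the $\kappa$-approximation property to obtain a branch $d\in V$, then pull the contradiction back to $V$ via the set $\{c\in A\mid d_c=d\cap c\}$ and a $\Cl_f$ argument. The only cosmetic differences are that you construct $d$ explicitly as $\bigcup_{a\in S}d_a$ (the paper simply asserts the existence of such a $d$ from non-effability) and that you define your ground-model witness $T$ as a subset of $A$ from the outset, which is in fact slightly cleaner than the paper's $\bar S\subseteq P^V_\kappa\lambda$.
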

\begin{proof}
	Work in $W$.
	For $A \in I_\IT^V[\kappa, \lambda]$ let $\langle d_a\ |\ a \in P^V_\kappa \lambda \rangle \in V$ be $A$-effable in $V$.

	Then by Lemma~\ref{lemma.thin_from_groundmodel} $\langle d_a\ |\ a \in P_\kappa \lambda \rangle$ is thin, where $d_a \coloneqq \emptyset$ for $a \notin V$.

	Suppose $\langle d_a\ |\ a \in P_\kappa \lambda \rangle$ were not $A$-effable.
	Let $S \subset A$ be stationary and $d \subset \lambda$ such that $d_x = d \cap x$ for all $x \in S$.
	Suppose $d \notin V$.
	Then, by $\kappa$-approximation property, there is a $z \in P^V_\kappa \lambda$ such that $d \cap z \notin V$.
	But for $x \in S$ with $z \subset x$ we have $d \cap z = d \cap x \cap z = d_x \cap z \in V$, a contradiction.
	Therefore $d \in V$, and $S \subset \bar{S} \coloneqq \{ x \in P^V_\kappa \lambda\ |\ d_x = d \cap x \} \in V$.
	Since $\langle d_a\ |\ a \in P^V_\kappa \lambda \rangle \in V$ is $A$-effable in $V$, $\bar{S}$ is not stationary in $V$.
	So there exists $C \in V$, $C \subset P^V_\kappa \lambda$ club in $V$ such that $C \cap \bar{S} = \emptyset$.
	Let $f: P_\omega \lambda \to P_\kappa \lambda$ be in $V$ such that $\Cl_f^V \subset C$.
	But then, by the stationarity of $S$, there is an $x \in S$ such that $x \in \Cl_f$, so that $x \in C \cap \bar{S}$,
	a contradiction.
\end{proof}

\begin{theorem}\label{theorem.old_filter_subset_of_new_filter}
	Let $V \subset W$ be a pair of models of\/ \ZFC\ that satisfies the $\kappa$-covering property and the $\tau$-approximation property for some $\tau < \kappa$, and suppose $\kappa$ is inaccessible in $V$.
	Then
	\begin{equation*}
		P^W_\kappa \lambda - P^V_\kappa \lambda \in I^W_\IT[\kappa, \lambda],
	\end{equation*}
	which furthermore implies
	\begin{equation*}
		F_\IT^V[\kappa, \lambda] \subset F^W_\IT[\kappa, \lambda].
	\end{equation*}
	So in particular, if $W \models \ITP(\kappa, \lambda)$, then $V \models \ITP(\kappa, \lambda)$.
\end{theorem}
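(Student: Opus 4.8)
The plan is to reduce everything to a single assertion: that the set $A \coloneqq P^W_\kappa\lambda \setminus P^V_\kappa\lambda$ of ``new'' small subsets of $\lambda$ belongs to $I^W_\IT[\kappa,\lambda]$, i.e.\ that in $W$ there is a thin $P^W_\kappa\lambda$-list which is $A$-effable. Once this is in hand, the remaining claims are bookkeeping. Since $\tau < \kappa$, the $\tau$-approximation property trivially implies the $\kappa$-approximation property (the hypothesis covers more small sets than needed), so Proposition~\ref{prop.old_I_subset_of_new_I} applies and $I^V_\IT[\kappa,\lambda] \subseteq I^W_\IT[\kappa,\lambda]$. Then, for $B \in F^V_\IT[\kappa,\lambda]$, one has $P^V_\kappa\lambda \setminus B \in I^V_\IT[\kappa,\lambda] \subseteq I^W_\IT[\kappa,\lambda]$, and since $A \in I^W_\IT[\kappa,\lambda]$ and the latter is an ideal, $P^W_\kappa\lambda \setminus B = (P^V_\kappa\lambda \setminus B) \cup A \in I^W_\IT[\kappa,\lambda]$, that is, $B \in F^W_\IT[\kappa,\lambda]$. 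Likewise, if $V \models \neg\ITP(\kappa,\lambda)$ then a thin effable $P^V_\kappa\lambda$-list witnesses $P^V_\kappa\lambda \in I^V_\IT[\kappa,\lambda] \subseteq I^W_\IT[\kappa,\lambda]$, whence $P^W_\kappa\lambda = P^V_\kappa\lambda \cup A \in I^W_\IT[\kappa,\lambda]$ and $W \models \neg\ITP(\kappa,\lambda)$; the last sentence of the theorem is the contrapositive.

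To build the list I would work in $W$ and set $d_a \coloneqq \emptyset$ for $a \in P^V_\kappa\lambda$. For $a \in A$ one has $a \subseteq \lambda \subseteq V$ but $a \notin V$, so the $\tau$-approximation property provides $z_a \in P^V_\tau\lambda$ (after intersecting the witness with $\lambda$) with $a \cap z_a \notin V$; fix one such $z_a$ by choice and put $d_a \coloneqq a \cap z_a \subseteq a$. For thinness, given $c \in P^W_\kappa\lambda$ I would use $\kappa$-covering to fix $\bar c \in P^V_\kappa\lambda$ with $c \subseteq \bar c$; then for $c \subseteq a \in A$ we get $d_a \cap c = z_a \cap c = (z_a \cap \bar c) \cap c$ with $z_a \cap \bar c \in P^V_\tau\bar c$, so $\{\, d_a \cap c \mid c \subseteq a \in P^W_\kappa\lambda \,\} \subseteq \{\emptyset\} \cup \{\, w \cap c \mid w \in P^V_\tau\bar c \,\}$, a set of size $<\kappa$ because $\kappa$ is inaccessible in $V$ and $|\bar c|^V, \tau < \kappa$. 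Hence the list is thin, witnessed by the trivial club $P^W_\kappa\lambda$ itself.

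The real work is $A$-effability, and this is where I expect the main obstacle. Suppose it fails: there is a stationary $S \subseteq A$ with $d_a = d_b \cap a$ for all $a \subseteq b$ in $S$. Since $S$ is cofinal (hence directed) in $P^W_\kappa\lambda$ and the $d_a$ ($a\in S$) are thereby pairwise coherent, $d \coloneqq \bigcup_{a \in S} d_a$ satisfies $d \cap a = d_a = a \cap z_a$ for every $a \in S$. Because $d \cap a \subseteq z_a$ for $a \in S$, every subset of $d$ of size $<\kappa$ has size $<\tau$ (cover it by a member of $S$), so $|d| < \tau$; then $d$ itself is covered by some $b_0 \in S$ and $d = d \cap b_0 = b_0 \cap z_{b_0} \notin V$. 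Now I would feed $d$ back into the $\tau$-approximation property: there is $w \in P^V_\tau\lambda$ with $d \cap w \notin V$. Choosing $b \in S$ with $d \cup w \subseteq b$ (legitimate since $|d \cup w| < \tau < \kappa$), we get $d \cap w = (d \cap b) \cap w = (b \cap z_b) \cap w = z_b \cap w$, using $w \subseteq b$ and $d \cap b = d_b$; but $z_b \cap w \in V$, contradicting $d \cap w \notin V$.

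The point where naive approaches break down is precisely this last step: one would like to press the witnesses $z_a$ down to a common value using the normality of the club filter on $P_\kappa\lambda$, but this is impossible because $z_a$ is never a subset of $a$ (if it were, $a \cap z_a = z_a$ would already be in $V$), so that filter gives no leverage. The device that resolves it is the realization that any would-be coherent branch $d$ must itself be a \emph{new} set of size $<\tau$, which makes it a legitimate target for the $\tau$-approximation property a second time. Everything else — the thinness count, the implication from $\tau$- to $\kappa$-approximation, and the ideal arithmetic of the first paragraph — is routine.
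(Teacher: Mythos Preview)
Your proposal is correct. The setup (the list $\langle d_a\rangle$, the thinness argument via $\kappa$-covering and inaccessibility, and the derivation of $F^V_\IT \subset F^W_\IT$ and the $\ITP$ transfer from Proposition~\ref{prop.old_I_subset_of_new_I}) matches the paper's proof essentially verbatim.

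Where you diverge is in the $A$-effability argument. The paper, having obtained a cofinal $U \subset A$ with $d_x = d \cap x$ for all $x \in U$, builds by recursion an increasing chain $\langle x_\alpha \mid \alpha < \tau^+ \rangle$ in $U$ together with ground-model covers $e_\alpha \supset x_\alpha$ in $P^V_\kappa\lambda$, arranged so that $e_\alpha \subset x_{\alpha+1}$. Since each $d_{x_\alpha}$ has size $<\tau$ and the sequence is $\subset$-increasing, it stabilizes at some $\gamma$; one then computes $d_{x_{\gamma+1}} = a_{x_{\gamma+1}} \cap e_\gamma \in V$, contradicting the choice of $a_{x_{\gamma+1}}$. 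Your route is more direct: you observe that the full branch $d$ already has size $<\tau$, pin down $d = d_{b_0} \notin V$ for some $b_0 \in S$, and then invoke the $\tau$-approximation property a \emph{second} time, applied to $d$ itself, to produce $w \in P^V_\tau\lambda$ with $d \cap w \notin V$; choosing $b \in S$ above $d \cup w$ gives $d \cap w = z_b \cap w \in V$. This avoids the transfinite construction entirely and is arguably cleaner; the paper's version, on the other hand, never needs to re-invoke the approximation property and produces the contradiction from a single intersection of two pre-chosen ground-model sets. Both arguments hinge on the same underlying observation that the coherent branch is trapped inside sets of size $<\tau$ coming from $V$.
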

\begin{proof}
	Work in $W$.
	Let $B \coloneqq P_\kappa \lambda - P^V_\kappa \lambda$.
	For $x \in B$ let $a_x \in P^V_\tau \lambda$ be such that $x \cap a_x \notin V$, which exists by the $\tau$-approximation property.
	Put $d_x \coloneqq a_x \cap x$.
	For $x \in P_\kappa \lambda - B$, let $d_x \coloneqq \emptyset$.
	Then $\langle d_x\ |\ x \in P_\kappa \lambda \rangle$ is thin by Lemma~\ref{lemma.thin_from_groundmodel}.

	Suppose $\langle d_x\ |\ x \in P_\kappa \lambda \rangle$ were not $B$-effable.
	Then there are $d \subset \lambda$ and $U \subset B$ be such that $U$ is cofinal and $d_x = d \cap x$ for all $x \in U$.
	Define a $\subset$-increasing sequence $\langle x_\alpha\ |\ \alpha < \tau^+ \rangle$ with $x_\alpha \in U$ for
	all $\alpha < \tau^+$ and a sequence $\langle e_\alpha\ |\ \alpha < \tau^+ \rangle$ such that
	$x_\alpha \subset e_\alpha$ and $e_\alpha \in P^V_\kappa \lambda$ for all $\alpha < \tau^+$ as follows.
	Let $\beta < \tau^+$ and suppose $\langle x_\alpha\ |\ \alpha < \beta \rangle$ and
	$\langle e_\alpha\ |\ \alpha < \beta \rangle$ have been defined.
	Let $x_\beta \in U$ be such that $\medcup_{\alpha < \beta} ( x_\alpha \cup a_\alpha \cup e_\alpha ) \subset x_\beta$,
	and let $e_\beta \in P^V_\kappa \lambda$ be such that $x_\beta \subset e_\beta$, which exists by the $\kappa$-covering property.

	Then $\langle d_{x_\alpha}\ |\ \alpha < \tau^+ \rangle$ is $\subset$-increasing as $d_{x_\alpha} = d \cap x_\alpha$ for all $\alpha < \tau^+$,
	and since $| d_{x_\alpha} | < \tau$ for all $\alpha < \tau^+$,
	there is $\gamma < \tau^+$ such that $d_{x_\alpha} = d_{x_{\alpha'}}$ for all $\alpha, \alpha' \in [\gamma, \tau^+)$.
	But then
	$a_{x_{\gamma+1}} \cap e_\gamma \subset a_{x_{\gamma+1}} \cap x_{\gamma+1} = d_{x_{\gamma+1}} = d_{x_\gamma} \subset e_{\gamma}$
	and $d_{x_{\gamma + 1}} \subset a_{x_{\gamma + 1}}$, so that $d_{x_\gamma} = a_{x_{\gamma+1}} \cap e_\gamma \in V$, a contradiction.

	To see $F_\IT^V[\kappa, \lambda] \subset F^W_\IT[\kappa, \lambda]$, let $A \in F^V_\IT[\kappa, \lambda]$.
	Then $P^V_\kappa \lambda - A \in I^V_\IT[\kappa, \lambda]$, so by Proposition~\ref{prop.old_I_subset_of_new_I}
	$P^V_\kappa \lambda - A \in I^W_\IT[\kappa, \lambda]$.
	Thus $P^W_\kappa \lambda - A = ( P^W_\kappa \lambda - P^V_\kappa \lambda ) \cup
	( P^V_\kappa \lambda - A ) \in I^W_\IT[\kappa, \lambda]$, which means $A \in F^W_\IT[\kappa, \lambda]$.
\end{proof}
Note that by~\cite[Theorem~1.1]{gitik.nonsplitting} the set $P^W_\kappa \lambda - P^V_\kappa \lambda$ in Theorem~\ref{theorem.old_filter_subset_of_new_filter} is stationary for $\lambda \geq \kappa^+$ if there is a real in $W - V$.
We will now weaken the assumption that $(V, W)$ satisfies the $\tau$-approximation property for some $\tau < \kappa$ to the $\kappa$-approximation property, so that this kind of argument can be exploited for a wider range of forcing constructions.

\begin{theorem}\label{theorem.pull_back_TP}
	Let $V \subset W$ be a pair of models of\/ \ZFC\ that satisfies the $\kappa$-covering and the $\kappa$-approximation properties, and suppose $\kappa$ is inaccessible in $V$.
	If $W \models \TP(\kappa, \lambda)$, then $V \models \TP(\kappa, \lambda)$.
\end{theorem}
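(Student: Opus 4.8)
The plan is to reflect a putative counterexample across the models. Fix a thin $P^V_\kappa\lambda$-list $D=\langle d_a\ |\ a\in P^V_\kappa\lambda\rangle$ lying in $V$; I want to produce a cofinal branch of $D$ inside $V$. The strategy is: extend $D$ to a thin $P^W_\kappa\lambda$-list $D^*$ in $W$, apply $\TP(\kappa,\lambda)$ in $W$ to get a cofinal branch $d\subset\lambda$, and then argue both that $d\in V$ and that $d$ is already a cofinal branch of $D$ as computed in $V$. (One should first note that $\kappa$ is still regular and uncountable in $W$ — immediate from the $\kappa$-covering property and inaccessibility of $\kappa$ in $V$ — so that invoking $\TP(\kappa,\lambda)$ in $W$ is legitimate.)

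The crucial choice, and the step where a naive approach fails, is how to extend $D$. Setting $d^*_a:=\emptyset$ for $a\notin V$ does yield a thin list in $W$ by Lemma~\ref{lemma.thin_from_groundmodel}, but then the sets $z$ with $d\cap a=d^*_z\cap a$ furnished by $\TP(\kappa,\lambda)$ need not lie in $V$, and there is no way to read off a $V$-branch. Instead I would use the $\kappa$-covering property to fix, in $W$, a function $a\mapsto\bar a$ with $a\subseteq\bar a\in P^V_\kappa\lambda$ for every $a\in P^W_\kappa\lambda$, and put $d^*_a:=d_{\bar a}\cap a$. Since $D\in V$ and $\bar a\in V$, each $d_{\bar a}$ lies in $V$, so $d^*_a=z_a\cap a$ with $z_a:=d_{\bar a}\in V$; hence Lemma~\ref{lemma.thin_from_groundmodel} applies and $D^*$ is thin in $W$. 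Applying $\TP(\kappa,\lambda)$ in $W$ gives a cofinal branch $d\subset\lambda$: for every $a\in P^W_\kappa\lambda$ there is $z\supseteq a$ in $P^W_\kappa\lambda$ with $d\cap a=d^*_z\cap a=d_{\bar z}\cap z\cap a=d_{\bar z}\cap a$, where $d_{\bar z}\in V$ and $\bar z\in P^V_\kappa\lambda$.

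From here everything is routine. For $d\in V$: given $z'\in P^V_\kappa\lambda$, apply the branch property to $a=z'$ to obtain $z\supseteq z'$ with $d\cap z'=d_{\bar z}\cap z\cap z'=d_{\bar z}\cap z'\in V$; since $d\subset\lambda$, for any $z\in P^V_\kappa V$ we have $d\cap z=d\cap(z\cap\lambda)$ with $z\cap\lambda\in P^V_\kappa\lambda$, so $d\cap z\in V$ for all $z\in P^V_\kappa V$, and the $\kappa$-approximation property yields $d\in V$. For the branch property inside $V$: given $a\in P^V_\kappa\lambda$, take $z\supseteq a$ witnessing the branch property in $W$ and set $z_a:=\bar z$; then $z_a\in P^V_\kappa\lambda$, $a\subseteq z\subseteq z_a$, and $d\cap a=d_{\bar z}\cap z\cap a=d_{\bar z}\cap a=d_{z_a}\cap a$. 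As $d$, $D$, $a$, $z_a$ all belong to $V$ and the relations involved are absolute, $V$ sees that $d$ is a cofinal branch of $D$. Since $D$ was an arbitrary thin list of $V$, this gives $V\models\TP(\kappa,\lambda)$.

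The only genuinely delicate point is thus the decision to extend the list using the $V$-covers $\bar a$ rather than by $\emptyset$; this makes the extended list simultaneously thin in $W$ (via Lemma~\ref{lemma.thin_from_groundmodel}) and ``$V$-coherent enough'' that every $W$-witness $z$ can be replaced by the $V$-set $\bar z$. With that in hand, no further obstruction arises, and the argument is a streamlined variant of the first half of the proof of Proposition~\ref{prop.old_I_subset_of_new_I}.
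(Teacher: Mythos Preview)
Your proposal is correct and follows essentially the same route as the paper: extend the $V$-list to $W$ by setting $d^*_a = d_{\bar a}\cap a$ with $\bar a$ a $V$-cover of $a$, invoke Lemma~\ref{lemma.thin_from_groundmodel} to see the extended list is thin in $W$, apply $\TP(\kappa,\lambda)$ there, and then use the $\kappa$-approximation property together with the identity $d\cap a = d_{\bar z}\cap a$ to pull the branch back into $V$. Your write-up is in fact a bit more explicit than the paper's (you justify regularity of $\kappa$ in $W$ and spell out why the naive extension by $\emptyset$ would not suffice), but the underlying argument is the same.
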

\begin{proof}
	In $V$, let $D = \langle d_a\ |\ a \in P_\kappa \lambda \rangle$ be a $P_\kappa \lambda$-list.

	Now work in $W$. For every $a \in P_\kappa \lambda$ let, by the $\kappa$-covering property, $z_a \in P^V_\kappa \lambda$ be such that $a \subset z_a$.
	Define a $P_\kappa \lambda$-list $E = \langle e_a\ |\ a \in P_\kappa \lambda \rangle$ by $e_a \coloneqq d_{z_a} \cap a$.
	Then $E$ is thin by Lemma~\ref{lemma.thin_from_groundmodel}.

	Thus by $\TP(\kappa,\lambda)$ there is a cofinal branch $d$ for $E$.
	So for all $y \in P_\kappa \lambda$ there is $a \in P_\kappa \lambda$, $y \subset a$, such that $e_a \cap y = d\cap y$.
	In particular
	\begin{equation*}
		d \cap y = e_a \cap y = d_{z_a} \cap a \cap y = d_{z_a} \cap y.
	\end{equation*}
	Thus if $y \in P^V_\kappa \lambda$, then $d \cap y \in V$, so that $d \in V$ by the $\kappa$-approximation property.
	This means $d \in V$.
	But $d$ is also a cofinal branch for $D$ in $V$.
\end{proof}

\begin{corollary}\label{cor.strongly_compact_in_groundmodel}
	Let $\mathbb{P}$ be a standard iteration of length $\kappa$ and suppose $\kappa$ is inaccessible.
	If\/ $\mathbb{P}$ forces $\TP(\kappa)$, then $\kappa$ is strongly compact.
\end{corollary}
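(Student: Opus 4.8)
The plan is to transfer $\TP(\kappa)$ from the generic extension down to $V$ using Theorem~\ref{theorem.pull_back_TP}, and then read off strong compactness from Theorem~\ref{theorem.TP<->stronglycompact}. The hypotheses of Theorem~\ref{theorem.pull_back_TP} ask for a pair $V\subset W$ satisfying the $\kappa$-covering and $\kappa$-approximation properties with $\kappa$ inaccessible in $V$, so the first task is to check that $\mathbb{P}$ delivers such a pair. By Lemma~\ref{keylem1}, $\mathbb{P}$ is $\kappa$-cc and has the $\kappa$-approximation property, and $\kappa$ is inaccessible in $V$ by hypothesis, so the only thing missing from the list is the $\kappa$-covering property.

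Thus the one point that needs an argument beyond quoting earlier results is that a $\kappa$-cc forcing has the $\kappa$-covering property when $\kappa$ is regular (which it is, being inaccessible): given $x\in V[G]$ with $x\subset V$ and $|x|<\kappa$, fix in $V$ a name $\dot x$ and a condition forcing $\dot x$ to be an enumeration $\langle \dot a_\xi \mid \xi<\mu\rangle$ with $\mu<\kappa$ and each $\dot a_\xi$ a check name; for each $\xi$ a maximal antichain deciding $\dot a_\xi$ has size $<\kappa$, so the set $B_\xi\in V$ of its possible values has size $<\kappa$, and $z\coloneqq\bigcup_{\xi<\mu}B_\xi\in V$ is a ground-model set of size $\le \mu\cdot\sup_\xi|B_\xi|<\kappa$ (using regularity of $\kappa$) with $x\subset z$. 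Hence for every $V$-generic $G$ the pair $(V,V[G])$ satisfies the $\kappa$-covering and $\kappa$-approximation properties. I expect this covering computation, though entirely standard, to be the only place where a little care is required; everything else is bookkeeping.

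Finally, fix $\lambda\ge\kappa$ and a $V$-generic $G\subset\mathbb{P}$. Since $\mathbb{P}$ forces $\TP(\kappa)$, we have $V[G]\models\TP(\kappa,\lambda)$, so Theorem~\ref{theorem.pull_back_TP} applied to $(V,V[G])$ gives $V\models\TP(\kappa,\lambda)$. As $\lambda\ge\kappa$ was arbitrary, $V\models\TP(\kappa)$. Now $\kappa$ is inaccessible in $V$, so Theorem~\ref{theorem.TP<->stronglycompact}, applied inside $V$, yields that $\kappa$ is strongly compact, completing the proof.
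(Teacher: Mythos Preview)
Your proposal is correct and follows essentially the same route as the paper: invoke Lemma~\ref{keylem1} for $\kappa$-cc and the $\kappa$-approximation property, feed these into Theorem~\ref{theorem.pull_back_TP}, and finish with Theorem~\ref{theorem.TP<->stronglycompact}. The paper's proof is a one-liner that leaves the $\kappa$-covering step implicit (it is a standard consequence of $\kappa$-cc for regular $\kappa$), while you spell it out; otherwise the arguments are identical.
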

\begin{proof}
	This follows directly from Lemma~\ref{keylem1} and Theorem~\ref{theorem.pull_back_TP}.
\end{proof}
Notice that, together with Theorem~\ref{theorem.PFA->ISP}, Corollary~\ref{cor.strongly_compact_in_groundmodel} implies the following remarkable corollary.
\begin{corollary}\label{cor.PFA_requires_strongly_compact}
	Suppose $\kappa$ is inaccessible and \PFA\ is forced by a standard iteration of length $\kappa$ that collapses $\kappa$ to $\omega_2$.
	Then $\kappa$ is strongly compact.
\end{corollary}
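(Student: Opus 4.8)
The plan is to feed Theorem~\ref{theorem.PFA->ISP} into Corollary~\ref{cor.strongly_compact_in_groundmodel}; the only point not already packaged in those two results is the elementary implication $\ISP(\kappa) \rightarrow \TP(\kappa)$, which I would isolate first. So let $D = \langle d_a\ |\ a \in P_\kappa \lambda \rangle$ be a thin $P_\kappa \lambda$-list. By the remark following Definition~\ref{def.P_kappa_lambda.thin}, $D$ is slender, so $\ISP(\kappa, \lambda)$ provides an ineffable branch $d \subset \lambda$, witnessed by a set $S \subset P_\kappa \lambda$ which is stationary, hence cofinal in $P_\kappa \lambda$. Given any $a \in P_\kappa \lambda$, choose $z_a \in S$ with $a \subset z_a$; then $d \cap a = d_{z_a} \cap a$, so $d$ is a cofinal branch of $D$. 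Thus $\ISP(\kappa, \lambda) \rightarrow \TP(\kappa, \lambda)$ for every $\lambda \geq \kappa$, and in particular $\ISP(\kappa) \rightarrow \TP(\kappa)$.

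Next I would check that the iteration $\mathbb{P}$ forces $\TP(\kappa)$. Since $\mathbb{P}$ collapses $\kappa$ to $\omega_2$, in $V^{\mathbb{P}}$ the ordinal $\kappa$ equals $\omega_2$. Because $\mathbb{P}$ forces \PFA, Theorem~\ref{theorem.PFA->ISP} gives $V^{\mathbb{P}} \models \ISP(\omega_2)$, that is, $V^{\mathbb{P}} \models \ISP(\kappa)$; by the previous paragraph $V^{\mathbb{P}} \models \TP(\kappa)$. As the generic filter was arbitrary, $\mathbb{P}$ forces $\TP(\kappa)$. Now $\kappa$ is inaccessible, $\mathbb{P}$ is a standard iteration of length $\kappa$, and $\mathbb{P}$ forces $\TP(\kappa)$, so Corollary~\ref{cor.strongly_compact_in_groundmodel} immediately yields that $\kappa$ is strongly compact.

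I do not expect a genuine obstacle here: all of the forcing-theoretic work --- that a standard iteration has the $\kappa$-approximation property (Lemma~\ref{keylem1}) and that $\TP(\kappa)$ can then be pulled back to $V$ (Theorem~\ref{theorem.pull_back_TP}) --- is already absorbed into Corollary~\ref{cor.strongly_compact_in_groundmodel}, and the implication $\ISP(\kappa) \rightarrow \TP(\kappa)$ above is routine. The only detail worth a sentence is that a standard iteration of length $\kappa$ also satisfies the $\kappa$-covering property used in Theorem~\ref{theorem.pull_back_TP}: this holds because such a $\mathbb{P}$ is $\kappa$-cc of size at most $\kappa$ (being a direct limit along $\kappa$ of posets of size less than $\kappa$), so a nice-name argument shows every set of ordinals of size less than $\kappa$ in $V^{\mathbb{P}}$ is contained in a ground-model set of size less than $\kappa$, using the inaccessibility of $\kappa$.
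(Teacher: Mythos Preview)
Your proposal is correct and follows essentially the same route as the paper: the paper simply remarks that the corollary is immediate from Theorem~\ref{theorem.PFA->ISP} together with Corollary~\ref{cor.strongly_compact_in_groundmodel}, without spelling out the routine implication $\ISP(\kappa)\rightarrow\TP(\kappa)$ or the $\kappa$-covering property (the latter being a standard consequence of the $\kappa$-cc established in Lemma~\ref{keylem1}, given that $\kappa$ is regular). Your extra details are accurate and harmless, just more explicit than the paper chooses to be.
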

Corollary~\ref{cor.PFA_requires_strongly_compact} says that any of the known methods for producing a model of \PFA\ from a large cardinal assumption requires at least a strongly compact cardinal.
This can be improved to the optimal result if we require the iteration for forcing \PFA\ to be proper.
For this purpose we introduce an ad-hoc definition.
\begin{definition}
	Let $V \subset W$ be a pair of models of\/ \ZFC\ that satisfies the $\kappa$-covering and the $\kappa$-approximation properties, and suppose $\kappa$ is inaccessible in $V$.
	We say $M \in (P_\kappa' H_\theta^V)^W$ is \emph{$V$-guessing} if for all $z \in M$ and all $d \in P^V z$ there is an $e \in M$ such that $d \cap M = e \cap M$.
\end{definition}

The following two propositions should be seen as analogs of Propositions~\ref{prop.guessingmodels} and~\ref{prop.guessing->ISP}.
\begin{proposition}\label{prop.V-guessing}
	Let $V \subset W$ be a pair of models of\/ \ZFC\ that satisfies the $\kappa$-covering and the $\kappa$-approximation properties, and suppose $\kappa$ is inaccessible in $V$.
	Assume $W \models \ITP(\kappa, |H_\theta^V|)$ for some large enough $\theta$.
	Then in $W$ the set
	\begin{equation*}
		\{ M \in P'_\kappa H_\theta^V\ |\ \text{$M$ is $V$-guessing and closed under countable suprema} \}
	\end{equation*}
	is stationary.\footnote{However, it need not be a subset of $V$.}
\end{proposition}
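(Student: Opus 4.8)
The plan is to imitate the proof of Proposition~\ref{prop.guessingmodels}. Work in $W$ and suppose, towards a contradiction, that the displayed set is nonstationary; fix $F\colon P_\omega H_\theta^V\to P_\kappa H_\theta^V$ witnessing this, so that every $M\in\Cl_F$ lying in $P_\kappa' H_\theta^V$ either fails to be $V$-guessing or fails to be closed under countable suprema (we may absorb the requirements $\langle M,\in\rangle\prec\langle H_\theta^V,\in\rangle$ and $\kappa\cap M\in\Ord$ into $F$). For each such $M$ record a witness $d_M$ to its badness, arranging that $d_M=z_M\cap M$ for some $z_M\in V$: if $M$ is not $V$-guessing, pick $z_M\in M$ and $c_M\in P^V z_M$ such that $c_M\cap M$ is not $M$-guessed and set $d_M\coloneqq c_M\cap M$ (the relevant ground model set being $c_M\in V$); if instead $M$ is not closed under countable suprema, pick $\gamma_M\in M$ and a countable $a_M\subseteq M\cap\gamma_M$ with $\delta_M\coloneqq\sup a_M\notin M$ and set $d_M\coloneqq M\cap\delta_M$ (the relevant ground model set being the ordinal $\delta_M$). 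Put $d_M\coloneqq\emptyset$ for $M\notin\Cl_F$. By Lemma~\ref{lemma.thin_from_groundmodel}---this is where the $\kappa$-covering property and the inaccessibility of $\kappa$ in $V$ enter---the $P_\kappa H_\theta^V$-list $D=\langle d_M\ |\ M\in P_\kappa H_\theta^V\rangle$ is thin.

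Now apply $\ITP(\kappa,|H_\theta^V|)$ in $W$ (transferring to $P_\kappa|H_\theta^V|$ through a bijection chosen in $V$, exactly as at the start of the proof of Proposition~\ref{prop.guessingmodels}) to obtain an ineffable branch $d$, and let $S\coloneqq\{M\in\Cl_F\ |\ d_M=d\cap M\}$, which is stationary (a witness $d_M$ is never empty, so the stationary set witnessing ineffability meets $\Cl_F$ stationarily). Using that a regressive function on a stationary subset of $P_\kappa H_\theta^V$ is constant on a stationary subset, shrink $S$ so that either every $M\in S$ is a non-$V$-guessing witness with a common value $z$ of $z_M$, or every $M\in S$ is a non-closure witness with a common value $\gamma$ of $\gamma_M$. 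In the first case the argument of Proposition~\ref{prop.guessingmodels} goes through unchanged: since $d_M\subseteq z$ for all $M\in S$ and $\bigcup S=H_\theta^V$ we get $d\subseteq z$, hence $d\in Pz\subseteq H_\theta^V$, and picking $M\in S$ with $d\in M$ shows $c_M\cap M=d_M=d\cap M$ is $M$-guessed by $d$, contradicting the choice of $c_M$.

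In the second case, the identities $d\cap z'=(M\cap\delta_M)\cap z'=z'\cap\delta_M$, valid for $z'\in P_\kappa^V\gamma$ with $z'\subseteq M\in S$, show that $d$ is downward closed in the ordinals, so $d$ is an ordinal $\delta\leq\gamma<\theta$; in particular $d\in H_\theta^V$. Pick $M\in S$ with $\delta\in M$. Then $M\cap\delta=d\cap M=d_M=M\cap\delta_M$, so $\delta_M=\sup(M\cap\delta)$ because $a_M$ is cofinal in $\delta_M$; if $\delta_M=\delta$ then $\delta_M\in M$, contradicting the choice of $a_M$, so $\delta_M<\delta$ and $M$ has no ordinals in $[\delta_M,\delta)$. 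To finish one wants $\delta_M\in M$: if $\delta$ has cofinality $\omega$ as computed in $H_\theta^V$, then, since $\delta\in M\prec H_\theta^V$, the model $M$ contains a cofinal $\omega$-sequence $e$ through $\delta$, whose range is a countable subset of $M$ cofinal in $\delta$, forcing $\sup(M\cap\delta)=\delta$, i.e.\ $\delta_M=\delta\in M$, the desired contradiction.

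I expect the main obstacle to be precisely the last point: ruling out that the ineffable branch $\delta$ has uncountable cofinality in $H_\theta^V$. The natural remedies are to choose the non-closure witnesses more carefully---for instance always taking $\delta_M$ to be the least countably-cofinal ``gap'' of $M$ below $\gamma_M$---and to track cofinalities, using that a forcing with the $\kappa$-covering and $\kappa$-approximation properties does not add a new cofinal $\omega$-sequence through an ordinal that $V$ sees as having uncountable cofinality, so that the branch is forced to have cofinality $\omega$; alternatively one shows directly that models $M$ for which $\sup(M\cap\delta)$ is a gap strictly below $\delta\in M$ cannot form a stationary set. Everything else is a routine transcription of the proofs of Proposition~\ref{prop.guessingmodels} and Lemma~\ref{lemma.thin_from_groundmodel}.
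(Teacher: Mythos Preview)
Your treatment of the $V$-guessing half (Case~1) follows the paper's line, but one step does not go through ``unchanged'' from Proposition~\ref{prop.guessingmodels}: from $d\subseteq z$ you write $d\in Pz\subseteq H_\theta^V$, yet you are working in $W$, so $Pz$ is $P^W z$, which is not a subset of $H_\theta^V$. Before you can pick $M\in S$ with $d\in M$ you must show $d\in V$, and this is exactly where the paper invokes the $\kappa$-approximation property. The check is short: for $y\in P_\kappa^V H_\theta^V$ choose $M\in S$ with $y\subset M$; then $d\cap y=(c_M\cap M)\cap y=c_M\cap y\in V$ since $c_M\in V$ and $c_M\cap y\subset y\subset M$. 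With $d\in V$ one gets $d\in P^Vz\subset H_\theta^V$, and the rest of your Case~1 is correct.

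The Case~2 gap you flag is genuine, and your proposed remedies do not close it: $\kappa$-covering only yields $\cf^V(\delta)<\kappa$, not $\cf^V(\delta)=\omega$, so nothing forces the branch $\delta$ to have countable $V$-cofinality; and your option of showing directly that such gapped models are nonstationary is essentially the statement to be proved. The paper sidesteps the whole difficulty by \emph{decoupling} the two requirements. It invokes \cite[Theorem~3.5]{weiss}, a general fact (applied in $W$, with no reference to $V$) that the set of $M\in P_\kappa' H_\theta^V$ closed under countable suprema already lies in $F_\IT[\kappa,H_\theta^V]$. The paper then only carries out the analogue of your Case~1 (with the $\kappa$-approximation step above) to conclude that the non-$V$-guessing $M$ form a set in $I_\IT[\kappa,H_\theta^V]$. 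Since $I_\IT$ is normal and $\ITP(\kappa,|H_\theta^V|)$ makes it proper, every set in $F_\IT$ is stationary, and intersecting the two $F_\IT$-large sets gives the conclusion. So the fix is not to sharpen the Case~2 list but to drop it entirely and handle closure under countable suprema as a separate filter-membership statement.
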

\begin{proof} 
	Work in $W$.
	By~\cite[Theorem~3.5]{weiss}, we have that the set of all $M \in P_\kappa' H_\theta^V$ that are closed under countable suprema belongs to $F_\IT[\kappa, H_\theta^V]$.
	Assume that there were a set $A \notin I_\IT[\kappa, H_\theta^V]$ such that for all $M \in A$ there is $z_M \in M$ and $d_M \in P^V z_M$ such that $d_M \cap M \neq e \cap M$ for all $e \in M$.
	Then $D \coloneqq \langle d_M \cap M\ |\ M \in A \rangle$ is thin by Lemma~\ref{lemma.thin_from_groundmodel}.
	Thus by $\ITP(\kappa, |H_\theta^V|)$ there is an ineffable branch $d$ for $D$, and by the $\kappa$-approximation property we have $d \in V$.
	Let $S \coloneqq \{ M \in A\ |\ d_M \cap M = d \cap M \}$.
	Then $S \in V$ is stationary, and we may assume $z_M = z$ for some $z \in H_\theta^V$ and all $M \in S$.
	As $P^V z \subset H_\theta^V$ and $d \subset z$, there is an $M \in S$ such that $d \in M$, a contradiction.
\end{proof}

\begin{theorem}\label{theorem.pull_back_ISP}
	Let $V \subset W$ be a pair of models of\/ \ZFC\ that satisfies the $\kappa$-covering and the $\kappa$-approximation properties.
	Let $\kappa$ be inaccessible in $V$ and $\lambda$ be regular in $W$.
	Suppose that for all $\gamma < \kappa$ and every $S \subset \cof(\omega) \cap \gamma$ in $V$ it holds that $V \models$ ``$S$ is stationary in $\gamma$'' if and only if $W \models$ ``$S$ is stationary in $\gamma$.''
	Let $\theta$ be large enough.
	Suppose $M \in (P_\kappa' H_\theta^V)^W$ is a $V$-guessing model closed under countable suprema such that $\lambda \in M$.
	Then $M \cap \lambda \in V$ and $V \models \ITP(\kappa,\lambda)$.
\end{theorem}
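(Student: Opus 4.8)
The plan is to split the statement into two halves: first showing $M \cap \lambda \in V$, and then deducing $V \models \ITP(\kappa, \lambda)$. For the first half, I would apply the $V$-guessing hypothesis to $d := M \cap \lambda$ with $z = \lambda$: since $\lambda \in M$ and $M \cap \lambda \subset \lambda$, I want an $e \in M$ with $(M \cap \lambda) \cap M = e \cap M$, i.e. $M \cap \lambda = e \cap M$. But that is exactly the definition of $M$ being $V$-guessing applied to the set $M \cap \lambda$ itself — except that $V$-guessing quantifies over $d \in P^V z$, and a priori we do not know $M \cap \lambda \in V$, which is what we are trying to prove. This circularity is the \emph{main obstacle}, and I expect the fix is the same trick used in the proof of Proposition~\ref{prop.guessingmodels}: one does not guess $M\cap\lambda$ directly but uses a list built from the initial segments $M \cap \gamma$ for $\gamma$ ranging over a cofinal subset of $M \cap \lambda$, which \emph{are} in $V$ (being bounded subsets captured by the approximation property applied below $\kappa$), together with the fact that $M$ is closed under countable suprema to stitch them together. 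Concretely: for each $\gamma \in M \cap \lambda$, the set $M \cap \gamma$ has size $<\kappa$, hence by $\kappa$-covering sits inside some $z \in P^V_\kappa \lambda$, and then $V$-guessing gives $e_\gamma \in M$ with $M \cap \gamma = e_\gamma \cap M$; one then argues these $e_\gamma$ cohere and, using that $M\cap\lambda = \bigcup_{\gamma \in M \cap\lambda}(M\cap\gamma)$ with the closure under countable suprema handling the cofinalities $\le \omega$, the approximation property forces the union into $V$.

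For the second half, I would follow the template of Proposition~\ref{prop.guessing->ISP}, but now pulling back a thin list rather than a slender one, and into $V$ rather than into an elementary submodel. Since $M \prec H_\theta^V$ and $\lambda \in M$, it suffices to show $M \models \ITP(\kappa,\lambda)$, which by elementarity (as $M \cap \lambda \in V$, and $V$ computes $H_\theta$ correctly) will transfer to $V \models \ITP(\kappa,\lambda)$. So fix a thin $P_\kappa\lambda$-list $D = \langle d_a \mid a \in P_\kappa\lambda\rangle \in M$; the thinness is witnessed by a club $C \subset P_\kappa\lambda$ with $C \in M$. Then $M \cap \lambda \in C$ and the thinness at the "point" $M\cap\lambda$ tells us the relevant subset $d_{M\cap\lambda}$ is suitably captured — in fact one checks $d_{M \cap \lambda}$ is $M$-approximated in the relevant sense (its intersections with bounded pieces of $M$ lie in $V$, using thinness and $\kappa$-covering as in Lemma~\ref{lemma.thin_from_groundmodel}), so by the $V$-guessing property there is $e \in M$ with $d_{M\cap\lambda} = e \cap M$, hence $e \cap (M\cap\lambda) = d_{M\cap\lambda}$.

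Finally I set $S := \{ a \in P_\kappa\lambda \mid d_a = e \cap a \}$; then $S \in M$, and $M \cap \lambda \in S$ since $d_{M\cap\lambda} = e \cap M = e \cap (M \cap \lambda)$. To see $S$ is stationary: given any club $C' \subset P_\kappa\lambda$ with $C' \in M$, we have $M \cap \lambda \in C' \cap S$, so $H_\theta^V \models C' \cap S \neq \emptyset$, hence by elementarity $M \models C' \cap S \neq\emptyset$, so $M \models$ "$S$ is stationary." Thus $e$ is an ineffable branch for $D$ inside $M$, giving $M \models \ITP(\kappa,\lambda)$, and by elementarity $V \models \ITP(\kappa,\lambda)$. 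The role of the hypothesis equating stationarity of $V$-subsets of $\cof(\omega)\cap\gamma$ in $V$ and $W$ is, I expect, needed precisely to run the coherence/union argument in the first half — to ensure the clubs and stationary sets witnessing "$M$ closed under countable suprema" and the guessing agree between the two models so that the pulled-back objects genuinely land in $V$ with the right properties.
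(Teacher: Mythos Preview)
Your second half is essentially the paper's argument: once $M\cap\lambda\in V$ is established, the entry $d_{M\cap\lambda}$ of any list $D\in M\subset H_\theta^V$ is automatically an element of $V$ (no thinness or approximation needed), so $V$-guessing applies directly and yields $e\in M$ with $d_{M\cap\lambda}=e\cap M$; elementarity then gives that $e$ is an ineffable branch. Your detour through thinness and ``$M$-approximated in the relevant sense'' is unnecessary but harmless.

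The first half, however, has a genuine gap. You correctly spot the circularity in applying $V$-guessing to $M\cap\lambda$, but your proposed fix reproduces the same circularity one level down: to apply $V$-guessing to $M\cap\gamma$ you would need $M\cap\gamma\in V$, and nothing you have written establishes that. The phrase ``bounded subsets captured by the approximation property applied below $\kappa$'' does not correspond to anything the $\kappa$-approximation property actually says; it gives no special treatment to bounded sets, and there is no induction on $\gamma$ available here. Your coherence/union scheme never gets off the ground.

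The paper's route is entirely different and is the key idea you are missing (credited to Magidor). One fixes in $M$ a partition $\langle S_\alpha\mid\alpha<\lambda\rangle$ of $\cof(\omega)\cap\lambda$ into sets stationary in $V$, sets $\lambda_M\coloneqq\sup(M\cap\lambda)$, and proves
\[
M\cap\lambda \;=\; \{\,\delta<\lambda\mid V\models\text{``$S_\delta$ is stationary in $\lambda_M$''}\,\}.
\]
The right-hand side is defined in $V$ from $\langle S_\alpha\rangle\in V$ and the single ordinal $\lambda_M$, so it lies in $V$. The two inclusions use, respectively, the stationarity-preservation hypothesis (to pass stationarity of $S_\delta\cap\lambda_M$ from $V$ to $W$, after which closure under countable suprema puts a point of $S_\delta$ into $M$, whence $\delta\in M$) and the $V$-guessing of $M$ (to guess an arbitrary $V$-club $C\subset\lambda_M$ by some $e\in M$, so that $M\models e\cap S_\delta\neq\emptyset$ gives $C\cap S_\delta\neq\emptyset$). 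This is precisely where the hypothesis on stationary subsets of $\cof(\omega)\cap\gamma$ for $\gamma<\kappa$ is used; it has nothing to do with a coherence or union argument.
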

\begin{proof}
	Let $\langle S_\alpha\ |\ \alpha < \lambda \rangle \in M$ be a partition of $\cof(\omega) \cap \lambda$ into sets stationary in $V$.
	Let $\lambda_M \coloneqq \sup (M \cap \lambda)$.
	\begin{claim}\label{claim.magidor}
		It holds that
		\begin{equation*}
			M \cap \lambda = \{ \delta < \lambda\ |\ V \models \text{$S_\delta$ is stationary in $\lambda_M$} \} \in V.
		\end{equation*}
	\end{claim}
	\begin{claimproof}
		For one direction, let $\delta$ be such that $V \models$ ``$S_\delta$ is stationary in $\lambda_M$.''
		Notice that $\cf^V \lambda_M < \kappa$, so $W \models$ ``$S_\delta$ is stationary in $\lambda_M$.''
		As $M$ is closed under countable suprema, we get that $S_\delta \cap M \neq \emptyset$.
		Thus if $\beta \in S_\delta \cap M$, then $\delta$ is definable in $M$ as the $\alpha$ for which $\beta \in S_\alpha$, so that $\delta \in M$.

		For the other direction, let $\delta \in M \cap \lambda$ and let $C \in V$ be club in $\lambda_M$.
		As $C \subset \lambda \in M$ and $M$ is $V$-guessing, $C \cap M = e \cap M$ for some $e \in M$.
		Since $C \cap M$ is closed under countable suprema, $M\models$ ``$e$ is closed under countable suprema.''
		Thus $M \models e \cap S_\delta \neq \emptyset$, which proves $C \cap S_\delta \neq \emptyset$ as $e \cap S_\delta \cap M \subset C \cap S_\delta$.
	\end{claimproof}

	Now to argue that $V \models \ITP(\kappa, \lambda)$, it is enough to check that $H_\theta^V \models \ITP(\kappa, \lambda)$.
	Since $M \prec H_\theta^V$, it in turn suffices to verify $M \models \ITP(\kappa, \lambda)$.
	So let $D\in M$ be a $P^V_\kappa\lambda$-list.
	Since $M$ is $V$-guessing, $d_{M \cap \lambda} \in V$, and $d_{M \cap \lambda} \subset \lambda \in M$, we get that $d_{M \cap \lambda} = e \cap M$ for some $e \in M$.
	Then $M \models$ ``$e$ is an ineffable branch for $D$.''
\end{proof}

\begin{corollary}\label{cor.supercompact_in_groundmodel}
	Let $\mathbb{P}$ be a proper standard iteration of length $\kappa$ and suppose $\kappa$ is inaccessible.
	If\/ $\mathbb{P}$ forces $\ITP(\kappa)$, then $\kappa$ is supercompact.
\end{corollary}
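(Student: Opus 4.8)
The plan is to invoke Theorem~\ref{theorem.ITP<->supercompact}: since $\kappa$ is inaccessible it suffices to show $V\models\ITP(\kappa,\lambda)$ for every $\lambda\geq\kappa$, and I would obtain this by pulling $\ITP$ back from a generic extension, just as Corollary~\ref{cor.strongly_compact_in_groundmodel} pulled $\TP$ back via Theorem~\ref{theorem.pull_back_TP}, but now through the $V$-guessing model machinery of Proposition~\ref{prop.V-guessing} and Theorem~\ref{theorem.pull_back_ISP}.

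First I would fix a $V$-generic $G\subset\mathbb{P}$ and set $W\coloneqq V[G]$. By Lemma~\ref{keylem1}, $\mathbb{P}$ is $\kappa$-cc and has the $\kappa$-approximation property, and since $\kappa$ is regular the $\kappa$-cc also yields the $\kappa$-covering property (a new set of fewer than $\kappa$ ordinals is covered by the union of the $<\kappa$ possible values of a name for an enumeration of it, and this union has size $<\kappa$ because $\kappa$ is regular). Thus $(V,W)$ satisfies the $\kappa$-covering and $\kappa$-approximation properties and $\kappa$ is inaccessible in $V$, so the hypotheses common to Proposition~\ref{prop.V-guessing} and Theorem~\ref{theorem.pull_back_ISP} are in force.

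Next, fix a regular $\lambda\geq\kappa$ — which, as $\mathbb{P}$ is $\kappa$-cc, remains regular in $W$ — and a sufficiently large $\theta$. By assumption $W\models\ITP(\kappa)$, so in particular $W\models\ITP(\kappa,|H_\theta^V|)$, and Proposition~\ref{prop.V-guessing} yields, in $W$, a stationary set of $M\in P'_\kappa H_\theta^V$ that are $V$-guessing and closed under countable suprema; intersecting this with the club $\{ M \mid \lambda\in M \}$ I pick such an $M$ with $\lambda\in M$. It then remains to verify the one extra hypothesis of Theorem~\ref{theorem.pull_back_ISP}: that for every $\gamma<\kappa$ and every $S\subset\cof(\omega)\cap\gamma$ lying in $V$, $S$ is stationary in $\gamma$ in $V$ iff it is stationary in $\gamma$ in $W$. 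The ``if'' direction is immediate, since a ground-model club in $\gamma$ remains a club in $W$; the ``only if'' direction is precisely preservation of stationary subsets of $\cof(\omega)\cap\gamma$, which holds because $\mathbb{P}$ is proper. Theorem~\ref{theorem.pull_back_ISP} then delivers $V\models\ITP(\kappa,\lambda)$. Since this holds for all regular $\lambda\geq\kappa$, and $\ITP(\kappa,\mu)$ for arbitrary $\mu\geq\kappa$ follows from $\ITP(\kappa,\lambda)$ for any regular $\lambda\geq\mu$ by the monotonicity of $\ITP$ in its second argument (a routine restriction-and-projection argument, cf.\ \cite{weiss}), we conclude $V\models\ITP(\kappa)$, and hence $\kappa$ is supercompact by Theorem~\ref{theorem.ITP<->supercompact}.

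The step I expect to be the main obstacle is the verification of the stationarity-correctness hypothesis of Theorem~\ref{theorem.pull_back_ISP}, i.e.\ that a proper forcing preserves stationary subsets of $\cof(\omega)\cap\gamma$ for every $\gamma<\kappa$; this is the standard, but not entirely trivial, consequence of the fact that proper forcings preserve stationary subsets of $[\gamma]^{\omega}$. This is exactly the point at which properness, rather than merely being a standard iteration, is used, and it is what separates this corollary from Corollary~\ref{cor.strongly_compact_in_groundmodel}; the remaining ingredients (deriving $\kappa$-covering from $\kappa$-cc, the reduction to regular $\lambda$, and locating a suitable $M$ with $\lambda\in M$) are routine.
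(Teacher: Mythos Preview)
Your proposal is correct and follows exactly the paper's approach: the paper's proof is the single line ``This follows from Lemma~\ref{keylem1}, Proposition~\ref{prop.V-guessing}, and Theorem~\ref{theorem.pull_back_ISP},'' and you have simply unpacked those citations, correctly identifying that properness is used precisely to verify the stationarity-correctness hypothesis of Theorem~\ref{theorem.pull_back_ISP}. The auxiliary details you supply (deriving $\kappa$-covering from the $\kappa$-cc, reducing to regular $\lambda$ via monotonicity of $\ITP$, and locating $M$ with $\lambda\in M$) are all routine and accurate.
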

\begin{proof}
	This follows from Lemma~\ref{keylem1}, Proposition~\ref{prop.V-guessing}, and Theorem~\ref{theorem.pull_back_ISP}.
\end{proof}

Under the additional premise of properness, Corollary~\ref{cor.supercompact_in_groundmodel} implies the following strongest possible version of Corollary~\ref{cor.PFA_requires_strongly_compact}.
\begin{corollary}\label{cor.proper_requires_supercompact}
	Suppose $\kappa$ is inaccessible and \PFA\ is forced by a proper standard iteration of length $\kappa$ that collapses $\kappa$ to $\omega_2$.
	Then $\kappa$ is supercompact.
\end{corollary}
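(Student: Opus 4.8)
The plan is to chain together the two main results already established: \PFA\ implies $\ISP(\omega_2)$ (Theorem~\ref{theorem.PFA->ISP}) and the $\ITP$ pull-back corollary for proper standard iterations (Corollary~\ref{cor.supercompact_in_groundmodel}). Let $\mathbb{P}$ be the given proper standard iteration of length $\kappa$ that collapses $\kappa$ to $\omega_2$ and forces \PFA. First I would move to an arbitrary generic extension $V^{\mathbb{P}}$. There \PFA\ holds, and since $\mathbb{P}$ collapses $\kappa$ to $\omega_2$ the cardinal $\kappa$ is exactly $\omega_2^{V^{\mathbb{P}}}$; hence Theorem~\ref{theorem.PFA->ISP} gives $V^{\mathbb{P}} \models \ISP(\kappa, \lambda)$ for every $\lambda \geq \kappa$.

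Next I would downgrade $\ISP$ to $\ITP$. By the remark following Definition~\ref{def.P_kappa_lambda.thin}, every thin $P_\kappa\lambda$-list is slender, so the ineffable branch produced by $\ISP(\kappa, \lambda)$ for a thin list is in particular an ineffable branch witnessing the corresponding instance of $\ITP(\kappa, \lambda)$. Thus $V^{\mathbb{P}} \models \ITP(\kappa, \lambda)$ for all $\lambda \geq \kappa$, i.e. $\mathbb{P}$ forces $\ITP(\kappa)$. Since $\mathbb{P}$ is by hypothesis a proper standard iteration of length $\kappa$ and $\kappa$ is inaccessible, Corollary~\ref{cor.supercompact_in_groundmodel} applies and yields that $\kappa$ is supercompact, which is the assertion.

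The only point requiring attention is bookkeeping rather than new mathematics: one must confirm that the hypotheses invoked — properness and ``standard iteration of length $\kappa$'' in the sense of the Interlude, inaccessibility of $\kappa$, and the fact that ``$\mathbb{P}$ collapses $\kappa$ to $\omega_2$'' makes $\kappa$ the $\omega_2$ of $V^{\mathbb{P}}$ so that $\ISP(\omega_2)$ literally is $\ISP(\kappa)$ — match exactly what Theorem~\ref{theorem.PFA->ISP} and Corollary~\ref{cor.supercompact_in_groundmodel} demand. No further combinatorial or forcing argument is needed beyond those already carried out, so I do not expect a genuine obstacle here.
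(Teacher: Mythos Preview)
Your proposal is correct and follows exactly the route the paper intends: the corollary is stated immediately after Corollary~\ref{cor.supercompact_in_groundmodel} with the remark that it is implied by the latter under the additional premise of properness, and the implicit argument is precisely the chain you give---\PFA\ in $V^{\mathbb{P}}$ yields $\ISP(\omega_2)=\ISP(\kappa)$ by Theorem~\ref{theorem.PFA->ISP}, hence $\ITP(\kappa)$, and then Corollary~\ref{cor.supercompact_in_groundmodel} applies.
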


It should be noted that Sakai has pointed out a serious obstruction in removing the assumption of $\mathbb{P}$ being proper in Corollary~\ref{cor.proper_requires_supercompact}.
\begin{theorem}[Sakai, 2010]\label{theorem.sakai}
	Let $\kappa$ be a supercompact cardinal, $\theta > \kappa$ be sufficiently large, and suppose there is a Woodin cardinal $\mu > \theta$.
	Suppose $W$ is the standard semiproper forcing extension such that $W \models \MM + \kappa = \omega_2$.
	Then in $W$ it holds that for every stationary preserving forcing $\mathbb{P}$ the set
	\begin{equation*}
		\{ M \in P_{\omega_2} H_\theta\ |\ \exists G \subset \mathbb{P}\ \text{$G$ is $M$-generic},\ M \cap \omega_3 \notin V \}
	\end{equation*}
	is stationary in $P_{\omega_2} H_\theta$.
\end{theorem}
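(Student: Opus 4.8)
We only sketch the strategy. The plan is to produce the models $M$ by the \MM-analogue of Woodin's Lemma~\ref{lemma.PFA_stationarily_often}, applied not to $\mathbb{P}$ itself but to a carefully chosen enlargement of $\mathbb{P}$ that forces the resulting models to escape $V$. In spirit this reverses the situation of Theorem~\ref{theorem.pull_back_ISP}: there one concluded $M\cap\lambda\in V$ from the agreement of $V$ and $W$ on stationarity below $\kappa$ and from the $\kappa$-approximation property, both of which hold for proper iterations; here we exploit that a semiproper extension of $V$ satisfies neither, which is exactly what prevents Corollary~\ref{cor.proper_requires_supercompact} from applying to the \MM-iteration.

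First one needs the \MM-version of Lemma~\ref{lemma.PFA_stationarily_often}: if $\mathbb{Q}$ is stationary set preserving in $W$ and $\theta$ is sufficiently large, then $\{M\in P_{\omega_2}H_\theta:\exists G\subset\mathbb{Q}\ \text{$G$ is $M$-generic}\}$ is stationary in $P_{\omega_2}H_\theta$. This is proved exactly as by Woodin, with \MM\ in place of \PFA, by applying \MM\ to $\mathbb{Q}*\Coll(\omega_1,|H_\theta|)$, which is stationary set preserving because a stationary set preserving forcing followed by a $\sigma$-closed one is again stationary set preserving. By tacking suitable auxiliary factors onto $\mathbb{Q}$ one arranges, as in Lemma~\ref{lemma.PFA->ISP} and the discussion following it, that the model $M$ returned is an $\omega_3$-guessing model which is internally unbounded and closed under countable suprema. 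Now, given an arbitrary stationary set preserving $\mathbb{P}$, fix in $V$ a partition $\langle S_\delta:\delta<\omega_3\rangle$ of $\cof(\omega)\cap\omega_3$ into sets stationary in $V$ --- here $\omega_3^W=(\kappa^+)^V$, since the \MM-iteration preserves cardinals $\geq\kappa$ --- and run the previous paragraph with $\mathbb{Q}\coloneqq\mathbb{P}*\dot{\mathbb{R}}$, where $\dot{\mathbb{R}}$ is a stationary set preserving forcing arranged so that, for club-many $\gamma<\omega_3$ of uncountable cofinality, the set $Y_\gamma\coloneqq\{\delta<\gamma:S_\delta\cap\gamma\ \text{is stationary in }\gamma\}$, computed in the extension, is \emph{not} in $V$. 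For the $M$ so obtained, writing $\lambda_M\coloneqq\sup(M\cap\omega_3)$, a coding argument in the spirit of Claim~\ref{claim.magidor} --- using that $M$ is $\omega_3$-guessing, internally unbounded and closed under countable suprema --- identifies $M\cap\omega_3$ with $Y_{\lambda_M}$, so that $M\cap\omega_3\notin V$. Since the restriction to $\mathbb{P}$ of the $M$-generic filter for $\mathbb{Q}$ is $M$-generic for $\mathbb{P}$, and since this construction goes through relative to any $\Cl_f$, the set displayed in the theorem is stationary.

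The delicate point, and the only place where the large cardinal hypotheses enter, is the construction of $\dot{\mathbb{R}}$: it must be stationary set preserving over $W^{\mathbb{P}}$ and must genuinely force the anti-reflection configurations $Y_\gamma\notin V$. The supercompactness of $\kappa$ guarantees that $W$ is a model of $\MM^{++}$ with strong stationary reflection at $\omega_2$ and at $\omega_3^W=(\kappa^+)^V$, so that the partition $\langle S_\delta\rangle$ and the club-shooting forcings attached to it are well behaved; and the Woodin cardinal $\mu>\theta$ supplies, through the stationary tower $\mathbb{Q}_{<\mu}$ computed in $W$, stationary set preserving forcings realising essentially arbitrary reflection and anti-reflection patterns at $\omega_3^W$ while simultaneously absorbing any $\mathbb{P}\in H_\theta^W$. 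Isolating $\dot{\mathbb{R}}$ precisely, and verifying from $\kappa$ supercompact and $\mu$ Woodin that $\mathbb{P}*\dot{\mathbb{R}}*\Coll(\omega_1,\cdot)$ is stationary set preserving while still forcing $M\cap\omega_3\notin V$ --- together with pushing the coding of Claim~\ref{claim.magidor} through in the absence of the $\kappa$-approximation property --- is the main obstacle; the rest is a routine adaptation of the arguments of Section~\ref{sect.PFA}.
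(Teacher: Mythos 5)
The paper does not prove this theorem: it is stated as an attribution to Sakai (2010) with no argument given, so there is no internal proof to compare against. Judged on its own terms, your text is not a proof but a research plan, and you say so yourself: the construction of $\dot{\mathbb{R}}$, the verification that $\mathbb{P} * \dot{\mathbb{R}} * \Coll(\omega_1,\cdot)$ is stationary set preserving, and the adaptation of the coding of Claim~\ref{claim.magidor} are all deferred as ``the main obstacle.'' But those three items \emph{are} the theorem. Everything you do establish --- the \MM-version of Lemma~\ref{lemma.PFA_stationarily_often}, the extraction of an $M$-generic filter for $\mathbb{P}$ from one for $\mathbb{P} * \dot{\mathbb{R}}$, the relativization to an arbitrary $\Cl_f$ --- is routine and yields only that stationarily many $M$ admit generic filters, which is Woodin's lemma; the entire content of the statement is the extra clause $M \cap \omega_3 \notin V$, and for that no argument is supplied.

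Two specific points where the sketch would need real work. First, the appeal to the stationary tower: $\mathbb{Q}_{<\mu}$ below an arbitrary condition need not preserve stationary subsets of $\omega_1$, and no condition or factoring is specified under which it both preserves them and forces the pattern $Y_\gamma \notin V$; asserting that it ``supplies'' such forcings is exactly the assertion to be proved. Second, the coding: Claim~\ref{claim.magidor} identifies $M \cap \lambda$ with the stationarity trace of a partition \emph{as computed in the model supplying the guessing}, and its proof uses both the $V$-guessing property and the absoluteness of stationarity for subsets of $\cof(\omega)\cap\gamma$ between the two models --- the two hypotheses that fail (and that you need to fail) in the semiproper setting. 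The models $M$ that \MM\ hands you via Lemma~\ref{lemma.PFA->ISP} are guessing models \emph{over $W$}, not $V$-guessing, so running the Claim for them computes $M\cap\omega_3$ from data living in $W^{\mathbb{Q}}$ and says nothing by itself about membership in $V$; turning ``$Y_{\lambda_M}$ is forced out of $V$'' into ``$M\cap\omega_3\notin V$'' requires a two-model bookkeeping argument that is not present. Until $\dot{\mathbb{R}}$ is exhibited and that identification is carried out, the proposal does not establish the theorem.
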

In the setting of Theorem~\ref{theorem.sakai}, if one carries out the proof of Theorem~\ref{theorem.PFA->ISP} in $W$, one gets that $P_{\kappa}^W \lambda - P_{\kappa}^V \lambda \notin I_\IT^W[\kappa, \lambda]$ for $\lambda$ such that $\kappa < \lambda$ and $2^\lambda < \theta$.
This should be contrasted with Theorem~\ref{theorem.old_filter_subset_of_new_filter}.

\section{Conclusion}

There are several open problems which the results presented suggest.
The most appealing deals with the construction of an inner model in which $\omega_2$ has an arbitrary degree of supercompactness starting from a universe of sets in which $\MM$ holds.
It seems plausible to conjecture that if $\ISP(\kappa)$ holds, then for each $\lambda$ there is a simply definable transitive class in which $\kappa$ is $\lambda$-supercompact.
Such a line of thought has already been pursued by Foreman~\cite{FOR10}, where he proved that a certain strong form of Chang's conjecture for a small cardinal $\kappa$ implies that there is an $X$ such that $\kappa$ is huge in $L[X]$.
It has yet to be understood to what extent Foreman's ideas can be applied to the results of this paper;
a key issue in this context appears to be a thorough study of the properties of guessing models and of the ideals $I_\IS[\omega_2,\lambda]$ in models of \MM.

We also expect that many of the known consequences of \PFA\ and supercompactness might be obtained directly from the principle \ISP.
Examples are given in \cite{weiss}, where it is shown that $\ITP(\omega_2)$ implies the failure of some of the weakest forms of square incompatible with $\PFA$, and in \cite{VIA10}, where, using properties of guessing models, a new proof that \PFA\ implies \SCH\ is provided.
On the other hand we conjecture that $\ISP(\omega_2)$ does not decide the size of the continuum.

\ifthenelse{\boolean{usemicrotype}}{\microtypesetup{spacing=false}}{}
\bibliographystyle{amsplain}
\bibliography{megateorema}

\end{document}